\newtheorem{theorem}{Theorem}[section]
\newtheorem{lemma}[theorem]{Lemma}
\newtheorem{corollary}[theorem]{Corollary}
\newtheorem{example}[theorem]{Example}
\newtheorem{remark}[theorem]{Remark}
\numberwithin{equation}{section}
\newcommand*\patchAmsMathEnvironmentForLineno[1]{%
  \expandafter\let\csname old#1\expandafter\endcsname\csname #1\endcsname
  \expandafter\let\csname oldend#1\expandafter\endcsname\csname end#1\endcsname
  \renewenvironment{#1}%
     {\linenomath\csname old#1\endcsname}%
     {\csname oldend#1\endcsname\endlinenomath}}%
\newcommand*\patchBothAmsMathEnvironmentsForLineno[1]{%
  \patchAmsMathEnvironmentForLineno{#1}%
  \patchAmsMathEnvironmentForLineno{#1*}}%
\title{\bf Whitney-Graustein Homotopy of Locally Convex Curves via a Curvature Flow}
\author{\bf Laiyuan Gao}
\date{\today}
\begin{document}
\maketitle

\noindent {\bf Abstract}
Let $X_0, \widetilde{X}$ be two smooth, closed and locally convex curves in the plane with same winding number.
A curvature flow with a nonlocal term is constructed to evolve $X_0$ into $\widetilde{X}$.
It is proved that this flow exits globally, preserves both the local convexity and the elastic energy of the evolving curve.
If the two curves have same elastic energy then the curvature flow deforms the evolving curve
into the target curve $\widetilde{X}$ as time tends to infinity.
\\\\

\noindent {\bf Keywords} Whitney-Graustein Theorem, locally convex curve, curvature flow.\\
\noindent {\bf Mathematics Subject Classification (2010) }  {51M05, 53A07, 35K15}

\baselineskip 15pt

\section{Introduction}
\setcounter{equation}{0}

In 1937, H. Whitney \cite{Whitney-1937} showed that two smooth and closed curves in the Euclidean plane may be smoothly deformed to each other
if and only if the two curves have same winding number. Whitney in his paper said that this result and its proof had been
suggested by W. C. Graustein, so this fact is called Whitney-Graustein Theorem now.

Since 1980, geometers have created different kinds of curvature flows to study the deformation of curves, surfaces and
higher dimensional manifolds.
The developments of these curvature flows play very important roles in geometry and topology.
Apart from those higher dimensional arts, there are some profound and influential results on the curvature flow of curves, such as
the curve shortening flow by Gage \cite{Gage-1983, Gage-1984}, Gage-Hamilton \cite{Gage-Hamilton-1986} and Grayson \cite{Grayson-1987, Grayson-1989},
the expanding flow by Chow-Liou-Tsai \cite{Chow-Liou-Tsai} and Tsai \cite{Tsai-2005}, the anisotropic flow by Chou-Zhu \cite{Chou-Zhu-1999-1, Chou-Zhu-1999-2}
and the applications of curve flows to classical geometry by Angenent \cite{Angenent-2005} and so on.
In this situation, S. T. Yau in 2007 \cite{Lin-Tsai-2009} asked that whether one can use a parabolic curvature flow method to
evolve one curve to another. An answer to Yau's question is a realization
of Whitney-Graustein differential homotopy for closed curves via a curvature flow.

In order to settle this problem, Lin and Tsai in their paper \cite{Lin-Tsai-2009} defined a new parabolic model to evolve one convex curve
to another. They showed that if two convex curves have same length then their flow can deform one curve into the other, provided that
the flow exists globally.
Later, Tsai \cite{Tsai-2018} found the blow-up phenomenon of this linear flow.
Following Gage \cite{Gage-1993} and Gage-Li \cite{Gage-Li-1994}, Pan and Yang \cite{Pan-Yang-2019}
in 2017 studied a nonlocal flow which evolves convex curves
into a given centrosymmetric convex one. In the same time, inspired by Lin-Tsai \cite{Lin-Tsai-2009} and
Chou-Zhu \cite{Chou-Zhu-1999-1, Chou-Zhu-1999-2}, Gao and Zhang \cite{Gao-Zhang-2019} generalized
Gage's area-preserving flow \cite{Gage-1986} and proved that the generalized flow exists globally and, up to a
rescaling, evolves one convex curve to another given one. So the convex case of Yau's above question
of evolving one curve to another has been solved.

In this paper, the author continues to study Yau's question for the case of locally convex curves.
If a $C^2$ and closed curve in the plane has positive (relative) curvature everywhere then it is called locally convex.
Apart from convex curves, there are uncountably many other locally convex curves.
This is a very special phenomenon in the planar geometry, because compact  and locally convex hypersurfaces in higher
dimensional Euclidean spaces are all convex ones (see Hadamard's theorem \cite{Hadamard-1898} or \cite{Hopf-1983}).
Due to this reason, the curvature flows of locally convex curves in the plane arose some particular interests in the past several years
(see Chen-Wang-Yang \cite{Chen-Wang-Yang-2017}, Wang-Li-Chao \cite{Wang-Li-Chao-2017}, Wang-Wo-Yang \cite{Wang-Wo-Yang-2018}).
Locally convex curves also play an important role in understanding the asymptotic behavior of the famous
curve shortening flow (see Abresch-Langer \cite{Abresch-Langer-1986}, Altschuler \cite{Altschuler-1991},
Angenent \cite{Angenent-1991} and Epstein-Gage \cite{Epstein-Gage-1987}) and its generalization
(see Andrews \cite{Andrews-2002}).

Let $X_0: [0, L_0] \rightarrow \mathbb{R}^2 (s \mapsto (x(s), y(s)))$ be a $C^2$ and closed curve in the plane,
where $s$ is the arc length parameter and $L_0$ is the length. Denote by $\{T, N\}$ the Frenet frame of this curve, i.e.,
for each $s$, the ordered pair $(T(s), N(s))$ determines a positive orientation of the plane.
Denote by $\kappa_0(s): = \langle \frac{d T(s)}{ds}, N(s)\rangle$ the curvature of the curve $X_0$.
The winding number of $X_0$ is defined by
\begin{eqnarray}\label{eq:1.1.202003}
m(X_0):= \frac{1}{2\pi} \int_0^{L_0} \kappa_0(s) ds.
\end{eqnarray}
If $X_0$ is locally convex then $\kappa_0$ is positive everywhere and $m(X_0)$ is a positive integer.
The elastic energy of the curve (see \cite{Gao-Wang-2008}, \cite{Singer-2008})
is defined by  $$E: = \int_0^{L_0} (\kappa_0(s))^2 ds.$$
The function $p_0(s):= -\langle X_0(s), N(s)\rangle$ is called the support function of the curve.
Let $\theta$ be the tangent angle, i.e., the angle from positive direction of $x$-axis to the unit tangent vector.
Since $\frac{d \theta}{d s} = \kappa_0(s)$ is positive for each $s$, $\theta$ can be used as a parameter of $X_0$.
And the curve $X_0: mS^1 \rightarrow \mathbb{E}^2 (\theta \mapsto (x(\theta), y(\theta)))$ is
a $C^{2}$ mapping from the $m$-fold circle to the plane.
If the locally convex curve $X_0$ is parametrised
by the tangent angle $\theta$ then the Frenet formula is as follows
\begin{eqnarray}\label{eq:1.2.202003}
\frac{d T}{d \theta}(\theta) = N(\theta), ~~\frac{d N}{d \theta}(\theta) = -T(\theta).
\end{eqnarray}
Furthermore, differentiating the support function gives us
\begin{eqnarray}\label{eq:1.3.202003}
\frac{d p_0}{d \theta}(\theta) = \langle X_0(\theta), T(\theta)\rangle,
~~\frac{d^2 p_0}{d \theta^2}(\theta) = \rho_0(\theta)-p_0(\theta),
\end{eqnarray}
where $\rho_0(\theta)= \frac{1}{\kappa_0(\theta)}$ is the radius of the curvature.

Let $X_0, \widetilde{X}: mS^1 \rightarrow \mathbb{E}^2$ be two smooth and locally convex curves in the plane with same winding number $m$.
Denote by
$$X:  mS^1 \times [0, \omega)  \rightarrow \mathbb{E}^2 ~~((\theta, t) \mapsto (x(\theta, t), y(\theta, t)))$$
a family of locally convex curves
with $X(\theta, 0) = X_0(\theta)$, where $\theta$ is the tangent angle. Since the locally convex curve
$\widetilde{X}$ has the same winding number with $X(\cdot, t)$, $\theta$ can also be used as a parameter for this curve.
Let $p(\theta, t)$ be the support function of the curve $X(\cdot, t)$ and let $\rho(\theta, t)$ be its radius of curvature.
Denote by $\widetilde{p}(\theta)$
and $\widetilde{\rho}(\theta)$ the support function and the radius of curvature of the curve $\widetilde{X}(\cdot)$, respectively.
In order to answer Yau's question for the case of locally convex curves, the next curvature flow is introduced:
\begin{eqnarray}\label{eq:1.4.202003}
\left\{\begin{array}{ll}
\frac{\partial X}{\partial t}(\theta, t) = \alpha(\theta, t) T(\theta, t)
         + \left[2p(\theta, t) -\rho(\theta, t) -2\widetilde{p}(\theta, t) +\widetilde{\rho}(\theta, t) + f(t)\right] N(\theta, t),
\\ ~~~~~~~~~~~~~~~~~~~~~(\theta, t) \in  mS^1\times [0, \omega), \\
X(\theta, 0) = X_0(\theta), ~~\theta \in  mS^1,
\end{array} \right.
\end{eqnarray}
where the coefficient of the tangent component is formulated as
\begin{eqnarray}\label{eq:1.5.202003}
\alpha(\theta, t) = -\frac{\partial}{\partial \theta}\big(2p(\theta, t) -\rho(\theta, t) -2\widetilde{p}(\theta, t)
  +\widetilde{\rho}(\theta, t)\big)
\end{eqnarray}
and the nonlocal term is given by
\begin{eqnarray}\label{eq:1.6.202003}
f(t)= \frac{1}{\int_{mS^1} \kappa^2 d\theta}
   \left[\int_{mS^1} \kappa^2 \left(\frac{\partial^2 \rho}{\partial \theta^2}-\frac{d^2 \widetilde{\rho}}{d \theta^2}\right) d\theta
- \int_{mS^1} \kappa^2 (\rho - \widetilde{\rho}) d\theta\right].
\end{eqnarray}

\begin{remark}\label{remk:1.1.202003}
The local convexity of the evolving curve is preserved under the flow (see Lemma \ref{lem:2.9.202003}),
so the tangent angle $\theta$ can be used as a parameter of the evolving curve and
the evolution equation (\ref{eq:1.4.202003}) is well defined for $t> 0$.
\end{remark}

\begin{remark}\label{remk:1.1.202003}
In contrast to the flow by Lin-Tsai \cite{Lin-Tsai-2009}, the support functions $p$ and $\widetilde{p}$ and a complicated nonlocal term $f(t)$
are used in the flow equation  (\ref{eq:1.4.202003}). Without these terms, one can not expect the global existence and the convergence
of the flow (\ref{eq:1.4.202003}) with a generic initial locally convex curve $X_0$.
See a blow-up example of the flow by Lin-Tsai \cite{Lin-Tsai-2009} in the paper \cite{Tsai-2018}.
\end{remark}

The evolution equation (\ref{eq:1.4.202003}) is a completely nonlinear parabolic system for the evolving curve
$X(\theta, t)=(x(\theta, t), y(\theta, t))$, where $(\theta, t) \in  mS^1\times [0, \omega)$.
The purpose of this paper is to partially answer Yau's question by understanding the asymptotic behavior
of the evolving curve $X(\cdot, t)$.
As an application in the field of topology, this curvature flow can be used to realize
Whitney-Graustein differential homotopy for locally convex curves. The main result of this paper is as follows.

\begin{theorem}\label{thm:1.2.202003}
Let $X_0$ and $\widetilde{X}$ be two smooth  and locally convex curves.
The flow (\ref{eq:1.4.202003}) with initial $X_0$ and target $\widetilde{X}$ exists globally,
preserves both the positivity of the curvature and the elastic energy of the evolving curve $X(\cdot, t)$.
If $X_0$ and $\widetilde{X}$ have same elastic energy, then $X(\cdot, t)$ converges, in the sense of $C^\infty$ metric,
to the target curve $\widetilde{X}$ as time $t\rightarrow +\infty$.
\end{theorem}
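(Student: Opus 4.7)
The plan is to reduce the nonlocal flow (\ref{eq:1.4.202003}) to a decoupled linear heat equation together with an algebraic constraint, by splitting the radius of curvature into its mean and its mean-zero part. First I derive the scalar evolution equations: using (\ref{eq:1.2.202003})--(\ref{eq:1.3.202003}) and noting that the tangential coefficient (\ref{eq:1.5.202003}) is precisely the correction needed for $\theta$ to remain the tangent angle of $X(\cdot,t)$, at fixed $\theta$ the radius of curvature satisfies
\begin{equation*}
\partial_t \rho = \rho_{\theta\theta}-\rho+(\widetilde{\rho}-\widetilde{\rho}_{\theta\theta})-f(t),
\end{equation*}
with an analogous equation for $p$. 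Differentiating $E(t)=\int_{mS^1}d\theta/\rho$ in time and substituting this shows that the choice (\ref{eq:1.6.202003}) of $f(t)$ is exactly what forces $dE/dt\equiv 0$; thus elastic-energy preservation is built into the flow by design.

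The key step is the decomposition. Write $u=\rho-\widetilde{\rho}=v+\bar{u}(t)$ with $\bar{u}(t):=(2\pi m)^{-1}\int_{mS^1} u\,d\theta$. A short calculation gives $\partial_t\bar{u}=-\bar{u}-f(t)$ and, after cancellation,
\begin{equation*}
\partial_t v=v_{\theta\theta}-v\qquad\text{on }mS^1,
\end{equation*}
which is a linear autonomous heat equation, fully decoupled from $f$. Standard heat-semigroup estimates on $mS^1$ then yield the exponential smoothing bound $\|v(\cdot,t)\|_{C^k(mS^1)}\le C_k\,e^{-t}\|v(\cdot,0)\|_{C^k(mS^1)}$ for every $k\ge 0$, so the entire $\theta$-profile of $u$ is under uniform control for all time.

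Short-time existence of (\ref{eq:1.4.202003}) is standard once $\rho>0$, so the crux is promoting this to global existence with uniform positivity of $\rho$. Because $\bar{u}$ is constant in $\theta$, the uniform control of $v$ gives uniform bounds on $\|\rho\|_{C^k(\theta)}$ for every $k\ge 1$ that are independent of $\bar{u}(t)$. I then combine this uniform $C^2$-bound with the conservation $\int_{mS^1} d\theta/\rho=E_0$ and a quadratic Taylor expansion of $\rho$ near its minimum to deduce $\min_\theta\rho(\cdot,t)\ge c_0>0$, with $c_0$ depending only on $E_0$, $\|\widetilde{\rho}\|_{C^2}$ and $\|v(\cdot,0)\|_{C^2}$. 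This lower bound, together with the strict monotonicity of $c\mapsto\int_{mS^1} d\theta/(\widetilde{\rho}+v+c)$, bounds $\bar{u}(t)$ and $f(t)$ uniformly in $t$, rules out finite-time blow-up, and yields the global existence, positivity and energy-preservation assertions.

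For the convergence claim, under $E_0=\widetilde{E}$ the conservation reads $\int_{mS^1} d\theta/(\widetilde{\rho}+v+\bar{u})=\int_{mS^1} d\theta/\widetilde{\rho}$; since $v\to0$ in $C^\infty$ exponentially and $c\mapsto\int_{mS^1} d\theta/(\widetilde{\rho}+c)$ is strictly decreasing, one deduces $\bar{u}(t)\to 0$ and hence $\rho(\cdot,t)\to\widetilde{\rho}$ in $C^\infty$. Lifting this back via $p+p_{\theta\theta}=\rho$ and then to the curves themselves gives the claimed $C^\infty$ convergence $X(\cdot,t)\to\widetilde{X}$. The main obstacle in this plan is the uniform positive lower bound on $\rho$: without it, the nonlocal coefficient $f(t)$ is uncontrolled and the parabolic structure degenerates; the decoupling of $v$ from $f$ via the mean-zero decomposition is exactly what makes this bound tractable.
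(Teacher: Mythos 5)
Your proposal is correct and follows the same overall strategy as the paper --- reduce the flow to the scalar equation $\partial_t\rho=\rho_{\theta\theta}-\widetilde{\rho}_{\theta\theta}-\rho+\widetilde{\rho}-f(t)$, observe that the nonlocal term only affects the constant mode so that the oscillating part of $\rho-\widetilde{\rho}$ decays exponentially, and use conservation of the elastic energy to control the remaining constant and to identify the limit --- but the two key technical lemmas are packaged differently. Where you split $u=\rho-\widetilde{\rho}$ into its mean $\bar{u}(t)$ and mean-zero part $v$ and run the damped heat semigroup on $v$, the paper instead differentiates the equation in $\theta$ (so that $f(t)$ drops out), applies the maximum principle to $(\partial_\theta^i u)^2$, and obtains $|\partial_\theta^i u|\le\sqrt{C_i}\,e^{-t}$ for $i\ge1$; the two observations are equivalent, though your formulation makes the later identification of the limit $\rho_\infty=\widetilde{\rho}+c_0$ and the argument $\bar{u}(t)\to0$ somewhat more direct than the paper's subsequence extraction in Lemma \ref{lem:3.1.202003}. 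For the crucial positive lower bound on $\rho$, the paper proves a logarithmic Harnack inequality, $\ln\rho_{\max}-\ln\rho_{\min}\le\int_{mS^1}|\rho_\theta|/\rho\,d\theta\le M_1E$, and combines it with $\rho_{\max}\ge 2m\pi/E$ (Corollary \ref{cor:2.7.202003}); you instead use the uniform $C^2$ bound and a quadratic Taylor expansion at the minimum to show that $\int_{mS^1}d\theta/\rho$ would exceed the conserved value $E$ if $\rho_{\min}$ were too small. Both arguments use exactly the same inputs (uniform derivative bounds plus energy conservation) and both must be run through the same continuity argument on the maximal interval of local convexity, which you gesture at but, like the paper, should state explicitly to avoid circularity; your Taylor-expansion variant is arguably the more standard and quantitative of the two, while the paper's Harnack form also delivers the two-sided pinching $\rho_{\max}\le H\rho_{\min}$ used later.
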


The key idea in the proof of Theorem \ref{thm:1.2.202003} is to reduce the nonlinear system to the evolution equation
of the radius of the curvature $\rho$ (see (\ref{eq:2.7.202003})). It is a half linear equation
with nonlinear part contained in the integral term $f(t)$.
The short time existence and the global existence of the flow (\ref{eq:1.4.202003}) are proved in Section 2.
The convergence of this nonlocal flow is proved in Section 3. An example is presented in Section 4.

\section{Existence}

\subsection{Short Time Existence}
In this subsection, we prove that the flow (\ref{eq:1.4.202003}) has a smooth solution on the domain
$mS^1 \times [0, t_0)$, where $t_0$ is a positive number.

Suppose there is a family of locally convex curves evolving under the flow (\ref{eq:1.4.202003}).
Denote $\beta (\theta, t) = 2p(\theta, t) -\rho(\theta, t) -2\widetilde{p}(\theta, t) +\rho(\theta, t) + f(t)$.
By direct calculations, one has the following evolution equations.

\begin{lemma}\label{lem:2.1.202003}
Applying the equations (1.14)-(1.17) in the book \cite{Chou-Zhu-1999-1}, one obtains
\begin{eqnarray}
&& \frac{\partial T}{\partial t} = \left(\alpha \kappa +\frac{\partial \beta}{\partial s} \right)N
= \left(\alpha +\frac{\partial \beta}{\partial \theta} \right)\kappa N,  \label{eq:2.1.202003}
\\
&& \frac{\partial N}{\partial t} = -\left(\alpha \kappa +\frac{\partial \beta}{\partial s} \right) T
= -\left(\alpha +\frac{\partial \beta}{\partial \theta} \right)\kappa,  \label{eq:2.2.202003}
\\
&& \frac{\partial \theta}{\partial t} =\alpha \kappa +\frac{\partial \beta}{\partial s}
= \left(\alpha +\frac{\partial \beta}{\partial \theta} \right)\kappa,
 \label{eq:2.3.202003} \\
&&\frac{\partial \kappa}{\partial t} = \kappa^2 \left(-\frac{\partial^2 \rho}{\partial \theta^2} + \frac{\partial^2 \widetilde{\rho}}{\partial \theta^2}
+ \rho - \widetilde{\rho} +f(t)\right).  \label{eq:2.4.202003}
\end{eqnarray}
\end{lemma}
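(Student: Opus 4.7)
The plan is to apply the general evolution formulas for a planar curve flow $X_t = \alpha T + \beta N$ given as (1.14)--(1.17) in \cite{Chou-Zhu-1999-1}, with the specific tangential and normal coefficients $\alpha,\beta$ from (\ref{eq:1.4.202003})--(\ref{eq:1.5.202003}), and then simplify using the support function identity (\ref{eq:1.3.202003}) together with the relation $\alpha = -\partial_\theta\beta$ that is forced by (\ref{eq:1.5.202003}) (observe that $f(t)$ is $\theta$-independent, so it contributes nothing to $\partial_\theta\beta$).

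The identities (\ref{eq:2.1.202003})--(\ref{eq:2.3.202003}) are immediate. The generic formulas from \cite{Chou-Zhu-1999-1} read $T_t = (\alpha\kappa+\beta_s)N$, $N_t = -(\alpha\kappa+\beta_s)T$ and $\theta_t = \alpha\kappa+\beta_s$, giving the first equality on each line by direct substitution. The second equality on each line follows from the basic identity $\partial_s = \kappa\,\partial_\theta$, which is valid because the evolving curve remains locally convex.

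The only real computation lies in (\ref{eq:2.4.202003}). Starting from the generic formula $\kappa_t = \beta_{ss} + \beta\kappa^2 + \alpha\kappa_s$, I would expand $\beta_{ss} = \partial_s(\kappa\beta_\theta) = \kappa^2\beta_{\theta\theta} + \kappa\kappa_\theta\beta_\theta$ and $\kappa_s = \kappa\kappa_\theta$, obtaining
\[
\kappa_t \;=\; \kappa^2(\beta_{\theta\theta}+\beta) \;+\; \kappa\kappa_\theta(\beta_\theta+\alpha).
\]
Since $\alpha = -\beta_\theta$, the mixed $\kappa_\theta$ term drops out entirely. It then remains to substitute $p_{\theta\theta} = \rho - p$ and $\widetilde{p}_{\theta\theta} = \widetilde{\rho}-\widetilde{p}$ from (\ref{eq:1.3.202003}) into $\beta_{\theta\theta}+\beta$; the support function terms cancel in pairs, leaving precisely $-\rho_{\theta\theta}+\widetilde{\rho}_{\theta\theta} + \rho -\widetilde{\rho} + f(t)$, which matches (\ref{eq:2.4.202003}).

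No serious obstacle is anticipated — the lemma is essentially a direct calculation, and the only care required is tracking chain-rule terms when passing between $s$- and $\theta$-derivatives. Conceptually, however, the content is exactly where the \emph{design} of the flow pays off: the choice (\ref{eq:1.5.202003}) makes $\alpha + \beta_\theta \equiv 0$, so $\theta_t \equiv 0$ and consequently $T_t = N_t \equiv 0$ along each fixed $\theta$-line. This is what will allow the tangent angle to serve as a global parameter for the entire flow in later sections, and what will reduce the fully nonlinear parabolic system for $X(\theta,t)$ to a scalar half-linear equation for the radius of curvature $\rho(\theta,t)$.
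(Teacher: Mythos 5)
Your proposal is correct and follows essentially the same route as the paper: the paper likewise just invokes the general evolution formulas (1.14)--(1.17) of Chou--Zhu for a flow $X_t=\alpha T+\beta N$, converts $s$-derivatives to $\theta$-derivatives via $\partial_s=\kappa\,\partial_\theta$, and uses $\alpha=-\beta_\theta$ together with $p_{\theta\theta}=\rho-p$, $\widetilde{p}_{\theta\theta}=\widetilde{\rho}-\widetilde{p}$ to reduce $\kappa_t=\kappa^2(\beta_{\theta\theta}+\beta)$ to (\ref{eq:2.4.202003}). Your cancellation of the $\kappa\kappa_\theta(\beta_\theta+\alpha)$ term and the pairwise cancellation of the support-function terms check out exactly.
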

By the choice of the tangent component (\ref{eq:1.5.202003}), we know $\alpha = -\frac{\partial \beta}{\partial \theta}$.
So both the Frenet frame $\{T, N\}$ and the tangent angle $\theta$ are independent of the time:
\begin{eqnarray}\label{eq:2.5.202003}
\frac{\partial T}{\partial t} \equiv 0, ~~\frac{\partial N}{\partial t} \equiv 0, ~~
\frac{\partial \theta}{\partial t} \equiv 0.
\end{eqnarray}
Using the above evolution equations, one can compute the evolution equation of the support function:
\begin{eqnarray*}
\frac{\partial p}{\partial t} = - \frac{\partial}{\partial t} \left\langle X, N \right\rangle
= - \left\langle  \frac{\partial X}{\partial t}, N\right\rangle = - \beta.
\end{eqnarray*}
So it follows from (\ref{eq:1.3.202003}) that
\begin{eqnarray}\label{eq:2.6.202003}
\frac{\partial p}{\partial t}
=\frac{\partial^2 p}{\partial \theta^2} (\theta, t) - p(\theta, t) +2\widetilde{p}(\theta, t) -\widetilde{\rho}(\theta, t) - f(t).
\end{eqnarray}

Using the equation (\ref{eq:2.4.202003}) and the fact that the radius of the curvature $\rho = \frac{1}{\kappa}$, we have
\begin{eqnarray}\label{eq:2.7.202003}
\frac{\partial \rho}{\partial t} = \frac{\partial^2 \rho}{\partial \theta^2} - \frac{\partial^2 \widetilde{\rho}}{\partial \theta^2}
- \rho + \widetilde{\rho} -f(t).
\end{eqnarray}
Since the function $\rho(\theta, t)$ determines the shape of the evolving curve, the flow (\ref{eq:1.4.202003})
can be reduced to the equation (\ref{eq:2.2.202003}) with initial $\rho(\theta, 0) = \rho_0(\theta)$ in some small time interval.
\begin{lemma}\label{lem:2.2.202003}
The flow (\ref{eq:1.4.202003}) is equivalent to the equation (\ref{eq:2.7.202003}) with initial $\rho(\theta, 0) = \rho_0(\theta)$
on some domain $ mS^1\times [0, t_0)$.
\end{lemma}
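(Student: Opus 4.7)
The plan is to prove the equivalence in both directions on a short time interval on which the evolving curve remains smooth and locally convex. The forward direction (if $X$ satisfies (\ref{eq:1.4.202003}) then $\rho(\theta,t):=1/\kappa(\theta,t)$ satisfies (\ref{eq:2.7.202003})) is essentially already obtained in the text: from (\ref{eq:2.4.202003}) one has $\partial_t \rho = -\kappa^{-2}\partial_t \kappa$, and substitution yields (\ref{eq:2.7.202003}) immediately; the initial condition is trivial. So only the converse requires real work.

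For the reverse direction, I would start with a smooth positive solution $\rho$ of (\ref{eq:2.7.202003}) with $\rho(\theta,0)=\rho_0(\theta)$ on some $mS^1\times[0,t_0)$ and reconstruct the curve explicitly. Because of (\ref{eq:2.5.202003}) the Frenet frame depends only on $\theta$, so I fix $T(\theta)=(-\sin\theta,\cos\theta)$ and $N(\theta)=(-\cos\theta,-\sin\theta)$. Next, solve the linear inhomogeneous parabolic equation (\ref{eq:2.6.202003}) for $p(\theta,t)$ on $mS^1\times[0,t_0)$ with $p(\theta,0)=p_0(\theta)$; since $\widetilde p,\widetilde\rho$ and $f(t)$ are known (the latter depending on the already-given $\rho$), this is standard. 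Define the candidate curve by $X(\theta,t):=p_\theta(\theta,t)\,T(\theta)-p(\theta,t)\,N(\theta)$.

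Three consistency checks then finish the argument. First, I need $\rho=p+p_{\theta\theta}$ for every $t$, not merely at $t=0$: setting $q:=p+p_{\theta\theta}$ and differentiating in $t$ using (\ref{eq:2.6.202003}) shows that $q$ satisfies exactly (\ref{eq:2.7.202003}) with $q(\cdot,0)=\rho_0$, so uniqueness for linear parabolic equations forces $q\equiv\rho$. Second, closure of the reconstructed curve requires $\int_{mS^1}\rho\cos\theta\,d\theta=\int_{mS^1}\rho\sin\theta\,d\theta=0$ for all $t$; testing (\ref{eq:2.7.202003}) against $\cos\theta$ and $\sin\theta$ and integrating by parts (using that $\widetilde X$ is closed and $m\in\mathbb Z$) produces an ODE of the form $\frac{d}{dt}I(t)=-2I(t)$, so these integrals remain zero. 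Third, differentiating $X=p_\theta T-pN$ in $t$ and substituting (\ref{eq:2.6.202003}) reproduces $\alpha T+\beta N$ with exactly the $\alpha$, $\beta$ of (\ref{eq:1.4.202003})--(\ref{eq:1.5.202003}); the initial condition $X(\theta,0)=X_0(\theta)$ then follows from the analogous formula applied to $p_0$.

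The main obstacle will be that (\ref{eq:2.7.202003}) is not a pure linear heat equation but carries the nonlocal term $f(t)$ from (\ref{eq:1.6.202003}), which depends nonlinearly on $\rho$ through $\kappa=1/\rho$ and through integrals involving $\rho_{\theta\theta}$. To justify the short-time solvability tacitly used above, I would run a Banach contraction-mapping argument on a small ball around $\rho_0$ in a suitable parabolic Hölder space: for any $\rho$ in that ball, the coefficient $f(t)$ is smooth and bounded thanks to $\rho_0>0$, the resulting linear parabolic Cauchy problem is uniquely solvable by Schauder theory, and Lipschitz estimates on $f$ in terms of low-order norms of $\rho$ yield a contraction once $t_0$ is small enough. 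Positivity of $\rho$ on the resulting short interval is preserved by continuity, completing the reduction.
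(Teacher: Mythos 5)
Your proof is correct, and the reverse direction takes a genuinely different route from the paper's. The paper reconstructs the curve directly from the radius of curvature, setting $X(\theta,t)=\int_0^\theta \rho\, T\,d\theta$ and asserting that this satisfies (\ref{eq:1.4.202003}) ``up to a parallel movement''; it does not spell out the closure condition $\int_{mS^1}\rho\, T\,d\theta=0$ or the handling of the $\theta$-independent integration constant. You instead reconstruct via the support function: solve the linear equation (\ref{eq:2.6.202003}) for $p$ (with $f(t)$ a known function of $t$ once $\rho$ is given), set $X=p_\theta T-pN$, and verify $p+p_{\theta\theta}\equiv\rho$ by observing that $q:=p+p_{\theta\theta}$ satisfies the same linear parabolic equation as $\rho$ with the same initial data --- a clean computation using $\widetilde p+\widetilde p_{\theta\theta}=\widetilde\rho$. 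This buys you two things the paper leaves implicit: the reconstructed curve is automatically closed (periodicity of $p$ gives it for free, so your separate check that $\int_{mS^1}\rho e^{i\theta}d\theta$ satisfies $I'=-2I$ is in fact redundant in your construction, though it is exactly what one must verify in the paper's), and the translation ambiguity disappears, so $X(\cdot,0)=X_0$ exactly rather than up to a parallel movement. Your closing discussion of short-time solvability of (\ref{eq:2.7.202003}) via a contraction argument on the nonlocal term is sound but belongs to Lemma \ref{lem:2.3.202003} rather than to the equivalence statement itself; the equivalence only presupposes that a smooth positive solution is handed to you.
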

\begin{proof}
Let $X(\cdot, t)$ be a family of locally convex curves evolving under the flow (\ref{eq:1.4.202003}).
We immediately have the evolution equation (\ref{eq:2.7.202003}).

On the other hand, suppose we have a smooth function $\rho(\cdot, t)$ satisfying
the evolution equation (\ref{eq:2.7.202003}) with initial $\rho(\theta, 0) = \rho_0(\theta)$
which is the radius of curvature of a given locally convex curve $X_0$, where $\theta \in mS^1$.
By the continuity of $\rho$, there is a positive $t_0$
such that $\rho (\theta, t) >0$ for all $(\theta, t) \in mS^1\times [0, t_0)$.
Then one can construct a family of locally convex curves:
\begin{eqnarray}\label{eq:2.8.202003}
X(\theta, t) : = \int_0^\theta \rho(\theta, t) T(\theta) d\theta
\end{eqnarray}
where $(\theta, t) \in mS^1\times [0, t_0)$. Using the equation (\ref{eq:2.7.202003}), one can check
that the $X(\cdot, t)$, up to a parallel movement, satisfies the equation (\ref{eq:1.4.202003}). So we have done.
\end{proof}

\begin{lemma}\label{lem:2.3.202003}
The flow (\ref{eq:1.4.202003}) has a unique smooth solution in some time interval $[0, \omega)$.
\end{lemma}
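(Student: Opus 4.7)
By Lemma \ref{lem:2.2.202003}, it suffices to establish short-time existence and uniqueness for the scalar nonlocal equation \eqref{eq:2.7.202003} with initial datum $\rho(\theta,0)=\rho_0(\theta)$ on $mS^1$. My plan is to view \eqref{eq:2.7.202003} as a linear heat-type equation on the $m$-fold circle,
$$\frac{\partial \rho}{\partial t}=\frac{\partial^2\rho}{\partial\theta^2}-\rho+g(\theta)-f(t),$$
where $g(\theta):=\widetilde{\rho}(\theta)-\frac{d^2\widetilde{\rho}}{d\theta^2}(\theta)$ is a smooth, time-independent source, and $f(t)$ is a scalar nonlocal coupling that depends on the unknown $\rho$ itself through \eqref{eq:1.6.202003} (via $\kappa=1/\rho$ and $\partial^2\rho/\partial\theta^2$). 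Since $\rho_0$ is smooth and strictly positive on the compact set $mS^1$, there exist constants $0<m_0<M_0<\infty$ and $K_0>0$ with $m_0\le\rho_0\le M_0$ and $\|\rho_0\|_{C^{2+\mu}}\le K_0$.

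The main step is a contraction mapping argument in a parabolic Hölder space. For small $t_0>0$ consider the complete metric space
$$\mathcal{B}_{t_0}=\Bigl\{\rho\in C^{2+\mu,1+\mu/2}\bigl(mS^1\times[0,t_0]\bigr): \rho(\cdot,0)=\rho_0,\ \tfrac{m_0}{2}\le\rho\le 2M_0,\ \|\rho\|_{C^{2+\mu,1+\mu/2}}\le 2K_0\Bigr\}.$$
For $\rho\in\mathcal{B}_{t_0}$, plug $\rho$ into the right-hand side of \eqref{eq:1.6.202003} to produce a continuous scalar function $f_\rho(t)$, and then solve the \emph{linear} inhomogeneous parabolic problem
$$\frac{\partial u}{\partial t}=\frac{\partial^2 u}{\partial\theta^2}-u+g(\theta)-f_\rho(t),\qquad u(\cdot,0)=\rho_0,$$
on $mS^1$, which has a unique smooth solution by standard parabolic Schauder theory. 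Define $\Phi(\rho)=u$. I then verify, by choosing $t_0$ sufficiently small depending only on $m_0,M_0,K_0,\widetilde{X}$, that (i) $\Phi$ maps $\mathcal{B}_{t_0}$ into itself, using continuity in $t$ to preserve positivity of $\rho$ and the Schauder estimate to control $\|u\|_{C^{2+\mu,1+\mu/2}}$; and (ii) $\Phi$ is a contraction, since $f_{\rho_1}(t)-f_{\rho_2}(t)$ is linear up to rational nonlinearities in $\rho_i$, and these are Lipschitz on the set $\{\rho\ge m_0/2\}$, so the difference of the linear solutions is controlled by $C\,t_0^{\mu/2}\|\rho_1-\rho_2\|$.

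The unique fixed point is a smooth solution $\rho$ on $mS^1\times[0,t_0]$. Standard parabolic bootstrapping upgrades the regularity of $\rho$ to $C^\infty$, and via \eqref{eq:2.8.202003} one recovers a smooth family $X(\cdot,t)$ satisfying \eqref{eq:1.4.202003}; uniqueness of the curve flow follows from the uniqueness of $\rho$ together with the rigidity built into \eqref{eq:2.5.202003}. Taking $\omega\in(0,\infty]$ to be the supremum of such $t_0$ yields the maximal existence interval $[0,\omega)$.

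The main obstacle I expect is handling the nonlocal term $f(t)$: it involves $\kappa^2=\rho^{-2}$ multiplied against $\partial^2\rho/\partial\theta^2$, so naive energy estimates are awkward. The device of decoupling $f$ by evaluating it on the previous iterate and then closing the loop via contraction is crucial, and it is precisely here that the lower bound $\rho\ge m_0/2$ in the definition of $\mathcal{B}_{t_0}$ is used to keep $\kappa=1/\rho$ and $\log\rho$ smoothly bounded. The global existence and preservation of positivity claimed in Theorem \ref{thm:1.2.202003} will require further a priori estimates beyond this short-time argument.
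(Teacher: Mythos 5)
Your proposal is correct and is in fact considerably more detailed than the paper's own proof, which essentially just asserts that (\ref{eq:2.7.202003}) is ``half linear'' and uniformly parabolic and cites classical theory for short-time existence and uniqueness. The one genuinely nontrivial point here is exactly the one you isolate: the nonlocal term $f(t)$ in (\ref{eq:1.6.202003}) depends on $\rho$ through $\kappa=1/\rho$ and $\partial^2\rho/\partial\theta^2$, so linear theory does not apply verbatim, and your device of freezing $f$ on the previous iterate and closing a contraction in a parabolic H\"older ball (with the lower bound $\rho\ge m_0/2$ keeping $\rho^{-2}$ Lipschitz) is the standard and correct way to supply the missing step; the paper's (commented-out) draft instead gestures at a linearization/inverse-function-theorem argument in the style of Chou--Zhu, which is the other standard route to the same end. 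The one place where the two arguments genuinely diverge is the regularity bootstrap: you invoke generic parabolic bootstrapping, whereas the paper exploits the specific structure that $f(t)$ is spatially constant, so differentiating (\ref{eq:2.7.202003}) in $\theta$ kills the nonlocal term entirely and every derivative $\partial^k\rho/\partial\theta^k$ satisfies the honest linear equation (\ref{eq:2.9.202003}); this observation is worth adopting, since it makes the smoothness claim immediate and is reused later in the Harnack estimate. A minor technical caveat in your contraction step: extracting the small factor $t_0^{\mu/2}$ in the full $C^{2+\mu,1+\mu/2}$ norm from Schauder estimates is slightly delicate, and it is cleaner to contract in a weaker norm (say $C^0$ or $C^{2,1}$) while keeping the H\"older ball only for compactness and invariance --- but this is a standard adjustment, not a gap.
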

\begin{proof}
The equation (\ref{eq:2.7.202003}) is half linear with a non-local term $f(t)$.
The equation is uniformly parabolic, so this Cauchy problem has a unique smooth solution in a short time interval.

Since the initial value $\rho_0$ is smooth and the higher order derivative evolves according to a linear equation
\begin{eqnarray}\label{eq:2.9.202003}
&& \frac{\partial^{k+1} \rho}{\partial \theta^k \partial t}
= \frac{\partial^{k+2} \rho}{\partial \theta^{k+2}} - \frac{\partial^{k+2} \widetilde{\rho}}{\partial \theta^{k+2}}
-\frac{\partial^k \rho}{\partial \theta^k} + \frac{\partial^k \widetilde{\rho}}{\partial \theta^k},
\end{eqnarray}
the higher order regularity of $\rho(\theta, t)$ can be obtained by the classical theory of linear parabolic equations.
Using the equation (\ref{eq:2.8.202003}), we know that the evolving curve $X(\cdot, t)$ is smooth.
\end{proof}

Under the flow (\ref{eq:1.4.202003}), the length of the evolving curve satisfies that
\begin{eqnarray*}
\frac{d L}{d t} = -L(t) + \widetilde{L} -2\pi f(t),
\end{eqnarray*}
where $\widetilde{L}$ is the length of $\widetilde{X}$ and the nonlocal term $f(t)$ is given by (\ref{eq:1.6.202003}).
In fact, the nonlocal term $f(t)$ is rather complicated. So the length $L(t)$ of the evolving curve has no
explicit solution.

\begin{remark}\label{remk:2.4.202003}
In the previous studies \cite{Gao-Zhang-2017, Lin-Tsai-2009}, the Fourier series expansion is applied to study
curvature flows. Under the flow (\ref{eq:1.4.202003}), both the evolution equation (\ref{eq:2.6.202003})
and (\ref{eq:2.7.202003}) are half linear. The nonlinear term $f(t)$ makes the method of solving linear equations with constant coefficients
by the Fourier series expansion do not work here.
\end{remark}

\begin{remark}\label{remk:2.5.202003}
Although the initial value $\rho(\theta, 0) = \rho_0(\theta)$ of the equation (\ref{eq:2.7.202003}) is positive on $mS^1$,
there is a lack of the maximum principle for the equation (\ref{eq:2.7.202003}).
Until now we do not know whether $\rho (\cdot, t)$ is always positive or not.
Once it is proved that $\rho (\cdot, t) >0$ holds for all $t\in (0, +\infty)$,
one can further show that the flow (\ref{eq:1.4.202003})
exits globally. We leave this part to the next subsection.
\end{remark}

\subsection{Global Existence}
In this subsection, it is proved that the flow (\ref{eq:1.4.202003}) exists on time interval $[0, +\infty)$.
We shall show that the curvature of the evolving curve has both uniformly lower and upper bounds and the
evolving curve $X(\cdot, t)$ is smooth for each $t \in (0, +\infty)$.

\begin{lemma}\label{lem:2.6.202003}
If the flow (\ref{eq:1.4.202003}) preserves the local convexity of the evolving curve, then the elastic energy is fixed as time goes.
\end{lemma}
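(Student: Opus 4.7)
The plan is to express $E(t)$ as an integral over the fixed parameter domain $mS^1$ and then differentiate directly, using the evolution equation (\ref{eq:2.4.202003}) for $\kappa$ together with the definition (\ref{eq:1.6.202003}) of the nonlocal coefficient $f(t)$. Because the hypothesis is that local convexity is preserved, $\theta$ is a valid global parameter of $X(\cdot,t)$ for every $t$, and by (\ref{eq:2.5.202003}) the tangent angle satisfies $\partial\theta/\partial t\equiv 0$, so the integration domain $mS^1$ does not move with time. Writing $ds = \rho\, d\theta$ and using $\kappa\rho = 1$, I would first rewrite
\[
E(t) = \int_0^{L(t)} \kappa^2\, ds = \int_{mS^1} \kappa^2\rho\, d\theta = \int_{mS^1} \kappa(\theta,t)\, d\theta,
\]
which is the form most convenient for time differentiation.

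Differentiating under the integral sign (justified by the smoothness from Lemma~\ref{lem:2.3.202003} and the compactness of $mS^1$) and substituting (\ref{eq:2.4.202003}) for $\partial\kappa/\partial t$, one obtains
\[
\frac{dE}{dt} = \int_{mS^1}\kappa^2\left(-\frac{\partial^2\rho}{\partial\theta^2} + \frac{\partial^2\widetilde{\rho}}{\partial\theta^2} + \rho - \widetilde{\rho}\right)d\theta + f(t)\int_{mS^1}\kappa^2\, d\theta.
\]
The definition (\ref{eq:1.6.202003}) of $f(t)$ is precisely tailored so that $f(t)\int_{mS^1}\kappa^2\, d\theta$ equals the negative of the preceding integral; after substitution, every term cancels and $dE/dt \equiv 0$.

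There is no substantial obstacle here: the entire content of the lemma is that the nonlocal coefficient $f(t)$ in the flow (\ref{eq:1.4.202003}) has been chosen, in effect, as a Lagrange multiplier enforcing conservation of the elastic energy, and the verification reduces to a direct substitution. The only minor point worth checking is that $\int_{mS^1}\kappa^2\, d\theta$ remains strictly positive so that $f(t)$ is well defined, but this is immediate from $\kappa>0$ on the compact domain $mS^1$.
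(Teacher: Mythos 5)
Your proof is correct and follows essentially the same route as the paper: rewrite $E=\int_{mS^1}\kappa\,d\theta$, differentiate under the integral using (\ref{eq:2.4.202003}), and observe that the definition (\ref{eq:1.6.202003}) of $f(t)$ forces the resulting expression to vanish. Your version is slightly more explicit about the cancellation and the well-definedness of $f(t)$, but the argument is the same.
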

\begin{proof}
Under the flow (\ref{eq:1.4.202003}), the curvature $\kappa(\theta, t)$ of the curve $X(\cdot, t)$ satisfies
the equation (\ref{eq:2.4.202003}). So the elastic energy of the evolving curve satisfies that
\begin{eqnarray*}
\frac{dE}{dt} &=& \frac{d}{dt} \int_{X(\cdot, t)} \kappa^2(s, t) ds
= \frac{d}{dt} \int_{mS^1}\kappa(\theta, t) d\theta
\\
&=& \int_{mS^1} \kappa^2 \left(-\frac{\partial^2 \rho}{\partial \theta^2} + \frac{\partial^2 \widetilde{\rho}}{\partial \theta^2}
+ \rho - \widetilde{\rho} +f(t)\right) d\theta¡£
\end{eqnarray*}
By the choice of the nonlocal term $f(t)$, we have $\frac{dE}{dt} \equiv 0$.
\end{proof}

To make the statement brief in the following proofs, we introduce two functions for $t \geq 0$.
Now define
$$\rho_{\max} (t) := \max\{\rho(\theta, t)| \theta \in mS^1\}, ~~
\rho_{\min} (t) := \min\{\rho(\theta, t)| \theta \in mS^1\}.$$

\begin{corollary}\label{cor:2.7.202003}
If the flow (\ref{eq:1.4.202003}) preserves the local convexity of the evolving curve, then
\begin{eqnarray}\label{eq:2.10.202003}
\rho_{\min} (t) \leq \frac{2m\pi}{E} \leq \rho_{\max} (t).
\end{eqnarray}
\end{corollary}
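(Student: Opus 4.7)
The plan is to use Lemma \ref{lem:2.6.202003} (constancy of the elastic energy under the flow) together with a reparametrization of the energy integral by the tangent angle $\theta$. Since the evolving curve is locally convex, the arc length element satisfies $ds = \rho(\theta,t)\, d\theta$, and hence
\begin{equation*}
E = \int_0^{L(t)} \kappa^2(s,t)\, ds = \int_{mS^1} \kappa^2(\theta,t)\, \rho(\theta,t)\, d\theta = \int_{mS^1} \kappa(\theta,t)\, d\theta = \int_{mS^1} \frac{d\theta}{\rho(\theta,t)}.
\end{equation*}
Lemma \ref{lem:2.6.202003} then tells us that the integral $\int_{mS^1} \rho^{-1}(\theta,t)\, d\theta$ equals the constant $E$ for every admissible $t$.

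With this identity in hand, the corollary is a one-line consequence. Since $\rho_{\min}(t) \leq \rho(\theta,t) \leq \rho_{\max}(t)$ on $mS^1$, dividing by $\rho$ and integrating gives
\begin{equation*}
\frac{2m\pi}{\rho_{\max}(t)} \leq \int_{mS^1}\frac{d\theta}{\rho(\theta,t)} = E \leq \frac{2m\pi}{\rho_{\min}(t)}.
\end{equation*}
Rearranging both inequalities yields $\rho_{\min}(t) \leq \frac{2m\pi}{E} \leq \rho_{\max}(t)$, which is (\ref{eq:2.10.202003}).

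There is no real obstacle here; the argument is essentially the mean value inequality applied to $1/\rho$, once one notices the reparametrization identity $E=\int_{mS^1}\rho^{-1}d\theta$. The only point to be careful about is that this reparametrization requires $\kappa>0$ everywhere, which is exactly the local convexity hypothesis assumed in the statement and guaranteed by the corresponding preservation property of the flow.
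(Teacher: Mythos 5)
Your proposal is correct and follows essentially the same route as the paper: both use the energy-preservation lemma, the identity $E=\int_{mS^1}\kappa\,d\theta=\int_{mS^1}\rho^{-1}\,d\theta$, and the bounds $\rho_{\min}\le\rho\le\rho_{\max}$ over the total angle $2m\pi$. No differences worth noting.
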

\begin{proof}
The elastic energy is preserved under the flow (\ref{eq:1.4.202003}), so
\begin{eqnarray*}
E= \int_{mS^1} \kappa(\theta, t) d\theta = \int_{mS^1} \frac{1}{\rho(\theta, t)} d\theta
\leq \frac{2m\pi}{\rho_{\min} (t)},
\end{eqnarray*}
i.e., $\rho_{\min} (t) \leq \frac{2m\pi}{E}$. Similarly, we have $\frac{2m\pi}{E} \leq \rho_{\max} (t)$.
\end{proof}

The following Harnack estimate is a key step towards the proof of the global existence of the flow (\ref{eq:1.4.202003}).
By this estimate, one may control the curvature uniformly.

\begin{lemma}\label{lem:2.8.202003} (Harnack estimate)
If the flow (\ref{eq:1.4.202003}) preserves the local convexity of the evolving curve, then there exists a constant $H$
independent of time such that
\begin{eqnarray}\label{eq:2.11.202003}
\rho_{\max} (t) \leq \rho_{\min} (t)\cdot H.
\end{eqnarray}
\end{lemma}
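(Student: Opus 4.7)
The plan is to split the Harnack ratio $\rho_{\max}/\rho_{\min}$ into two ingredients: a time-uniform gradient bound $|\rho_\theta| \leq M$ obtained from a maximum principle applied to a linearisation of (\ref{eq:2.7.202003}), together with a positive uniform lower bound on $\rho_{\min}(t)$ forced by the preservation of elastic energy (Lemma \ref{lem:2.6.202003}).

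To obtain the gradient bound, I would set $v := \rho - \widetilde{\rho}$, so that (\ref{eq:2.7.202003}) becomes $v_t = v_{\theta\theta} - v - f(t)$. The nonlocal term $f(t)$ depends only on $t$, hence differentiating once in $\theta$ eliminates it: $u := \partial_\theta v$ satisfies the linear periodic equation $u_t = u_{\theta\theta} - u$ on $mS^1$. The substitution $\phi := e^t u$ converts this into the standard heat equation, so by the maximum principle on the closed one-manifold $\|u(\cdot,t)\|_\infty \leq e^{-t}\|u(\cdot,0)\|_\infty$. In particular
\begin{equation*}
|\rho_\theta(\theta, t)| \;\leq\; \|\widetilde{\rho}_\theta\|_\infty + \|\rho_{0,\theta} - \widetilde{\rho}_\theta\|_\infty \;=:\; M
\end{equation*}
for all $(\theta, t) \in mS^1 \times [0, \omega)$; consequently $\rho_{\max}(t) - \rho_{\min}(t) \leq Mm\pi$, since $mS^1$ has diameter $m\pi$.

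For the lower bound, let $\theta_*(t)$ be a point where $\rho(\cdot, t)$ attains its minimum. The $C^1$ bound gives $\rho(\theta, t) \leq \rho_{\min}(t) + M\, d(\theta, \theta_*)$, where $d$ denotes the circular distance, and since $E = \int_{mS^1}\rho^{-1}\,d\theta$ is conserved, I would estimate
\begin{equation*}
E \;\geq\; \int_{d(\theta, \theta_*) \leq m\pi} \frac{d\theta}{\rho_{\min}(t) + M\,d(\theta,\theta_*)} \;=\; \frac{2}{M} \log\left(1 + \frac{M m\pi}{\rho_{\min}(t)}\right).
\end{equation*}
Rearranging yields $\rho_{\min}(t) \geq \frac{Mm\pi}{e^{EM/2} - 1} =: c > 0$, uniformly in $t$. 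Combining with the oscillation estimate then gives $\rho_{\max}(t) \leq \rho_{\min}(t) + Mm\pi \leq \rho_{\min}(t)\bigl(1 + Mm\pi/c\bigr)$, which is the desired (\ref{eq:2.11.202003}) with $H := 1 + Mm\pi/c$.

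The main obstacle I anticipate is this last step: the gradient estimate alone does not prevent $\rho(\cdot,t)$ from collapsing near a point, and only the logarithmic divergence of $\int \rho^{-1}$ in the vicinity of a would-be zero of $\rho$, weighed against the finite conserved value of $E$, produces the positive floor on $\rho_{\min}(t)$.
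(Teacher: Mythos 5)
Your proof is correct, and its first half coincides with the paper's: both differentiate $u=\rho-\widetilde{\rho}$ once in $\theta$ to eliminate the nonlocal term $f(t)$, apply the maximum principle to the resulting linear equation on the closed manifold $mS^1$, and obtain the time-uniform gradient bound $|\rho_\theta|\le M$ (the paper's (\ref{eq:2.13.202003}) with $i=1$, proved via the evolution of $(\partial_\theta u)^2$ rather than your substitution $\phi=e^tu$, but to the same effect). The two arguments then diverge in how they convert this into the multiplicative estimate (\ref{eq:2.11.202003}). The paper uses a one-line trick: it integrates $\partial_\theta(\ln\rho)=\rho_\theta/\rho$ from the minimum point to the maximum point and bounds $\int_{mS^1}\rho^{-1}|\rho_\theta|\,d\theta\le M_1\int_{mS^1}\rho^{-1}\,d\theta=M_1E$, using the conserved elastic energy as a weight, which yields $H=e^{M_1E}$ without ever needing a positive lower bound on $\rho_{\min}$. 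You instead prove the additive oscillation bound $\rho_{\max}-\rho_{\min}\le Mm\pi$ and extract a positive floor $\rho_{\min}(t)\ge Mm\pi/(e^{EM/2}-1)$ from the logarithmic divergence of $\int\rho^{-1}$ near a would-be zero of $\rho$, again against the conserved value of $E$; dividing then turns the additive bound into a multiplicative one. Your route costs slightly more computation but directly buys an explicit uniform lower bound on $\rho_{\min}$, which the paper only obtains afterwards (in Lemma \ref{lem:2.9.202003} and Theorem \ref{thm:2.10.202003}) by combining the Harnack estimate with Corollary \ref{cor:2.7.202003}. One cosmetic remark: your constant degenerates formally as $M\to 0$, but that case is trivial ($\rho$ independent of $\theta$, so $H=1$), and the limiting value $2m\pi/E$ is consistent with Corollary \ref{cor:2.7.202003}.
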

\begin{proof}
Set $u(\theta, t) = \rho(\theta, t) - \widetilde{\rho}(\theta)$. By the evolution equation of $\rho$ (or see the
equation (\ref{eq:2.9.202003})),
$\frac{\partial^i u}{\partial \theta^i}$ satisfies
\begin{eqnarray*}
\frac{\partial}{\partial t} \frac{\partial^i u}{\partial \theta^i}
= \frac{\partial^{i+2} u}{\partial \theta^{i+2}} - \frac{\partial^i u}{\partial \theta^i}.
\end{eqnarray*}
So the function $\left(\frac{\partial^i u}{\partial \theta^i}\right)^2$ evolves according to
\begin{eqnarray*}
\frac{\partial}{\partial t} \left(\frac{\partial^i u}{\partial \theta^i}\right)^2
&=& \frac{\partial^2}{\partial \theta^2}\left(\frac{\partial^i u}{\partial \theta^i}\right)^2
-2\left(\frac{\partial^{i+1} u}{\partial \theta^{i+1}}\right)^2-2\left(\frac{\partial^i u}{\partial \theta^i}\right)^2
\\
&\leq & = \frac{\partial^2}{\partial \theta^2}\left(\frac{\partial^i u}{\partial \theta^i}\right)^2
-2\left(\frac{\partial^i u}{\partial \theta^i}\right)^2.
\end{eqnarray*}
Applying the maximum principle, one obtains that
\begin{eqnarray} \label{eq:2.12.202003}
\left(\frac{\partial^i u}{\partial \theta^i}\right)^2 \leq C_i e^{-2t},
\end{eqnarray}
where $$C_i = \max\left\{\left(\frac{\partial^i u}{\partial \theta^i} (\theta, 0)\right)^2 \bigg| \theta\in mS^1\right\}$$
is a constant depending on the initial curve $X_0$.
Since $$\left|\frac{\partial^i \rho}{\partial \theta^i}\right|
\leq \left|\frac{\partial^i \widetilde{\rho}}{\partial \theta^i}\right| +\left|\frac{\partial^i u}{\partial \theta^i}\right|
\leq \left|\frac{\partial^i \widetilde{\rho}}{\partial \theta^i}\right| +\sqrt{C_i},$$
there is a constant, denoted by $M_i$, independent of time such that
\begin{eqnarray} \label{eq:2.13.202003}
\left|\frac{\partial^i \rho}{\partial \theta^i}\right| \leq M_i,
\end{eqnarray}
where $i= 1, 2, 3, \cdots$.

Fixed the time $t$. Suppose the function $\rho(\theta, t)$ attains its minimum and maximum at points $\theta_1, \theta_2$,
respectively. Compute
\begin{eqnarray*}
\ln \rho_{\max} (t) - \ln \rho_{\min} (t) = \int_{\theta_1}^{\theta_2} \frac{1}{\rho} \frac{\partial \rho}{\partial \theta} d \theta
 \leq  \int_{mS^1} \frac{1}{\rho} \left|\frac{\partial \rho}{\partial \theta}\right| d \theta
  \leq  M_1 E,
\end{eqnarray*}
where the constant $M_1$ is given by (\ref{eq:2.13.202003}).
Therefore, choosing $H= e^{M_1 E}$ can give us the estimate (\ref{eq:2.11.202003}).
\end{proof}

\begin{lemma}\label{lem:2.9.202003}
The flow (\ref{eq:1.4.202003}) preserves the local convexity of the evolving curve.
\end{lemma}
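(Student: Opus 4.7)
The plan is to establish positivity of $\rho(\cdot, t)$ by a continuation argument anchored on the a priori bounds of Lemmas \ref{lem:2.6.202003}--\ref{lem:2.8.202003}. Since $X_0$ is locally convex, $\rho_0 > 0$ on $mS^1$, and the smoothness provided by Lemma \ref{lem:2.3.202003} shows that the set of times on which positivity holds is open and contains $0$. Define
\[ T^* := \sup\{t \in [0, \omega) : \rho(\theta, \tau) > 0 \text{ for all } (\theta, \tau) \in mS^1 \times [0,t]\}, \]
so $T^* > 0$. The goal is to show $T^* = \omega$.

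On the sub-interval $[0, T^*)$ the flow is locally convex by construction, so the hypotheses of Lemmas \ref{lem:2.6.202003}, \ref{lem:2.8.202003} and Corollary \ref{cor:2.7.202003} are satisfied on that interval. First I would invoke Lemma \ref{lem:2.6.202003} to fix the elastic energy at $E$, then Corollary \ref{cor:2.7.202003} to extract $\rho_{\max}(t) \geq 2m\pi/E$, and finally the Harnack estimate (\ref{eq:2.11.202003}) to deduce the uniform two-sided bound
\[ \rho_{\min}(t) \geq \frac{\rho_{\max}(t)}{H} \geq \frac{2m\pi}{EH} > 0, \qquad t\in[0,T^*). \]

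Now suppose for contradiction that $T^* < \omega$. By the smoothness of $\rho$ on $mS^1 \times [0, \omega)$, the uniform bound extends by continuity to $\rho(\theta, T^*) \geq 2m\pi/(EH) > 0$ for every $\theta \in mS^1$. A further appeal to continuity then provides $\delta > 0$ with $\rho > 0$ on $mS^1 \times [0, T^* + \delta)$, contradicting the maximality of $T^*$. Hence $T^* = \omega$, proving the lemma.

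The main conceptual obstacle is the apparent circularity: Lemmas \ref{lem:2.6.202003}--\ref{lem:2.8.202003} are stated conditionally on the flow preserving local convexity, yet we rely on them precisely to prove that property. The resolution is structural rather than technical -- these lemmas only require local convexity on whatever sub-interval one works over, so the estimates are legitimate on $[0, T^*)$ -- but it is the pivot that converts conditional a priori bounds into the unconditional openness of the positivity set, allowing the continuation argument to close.
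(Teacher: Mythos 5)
Your proposal is correct and follows essentially the same route as the paper: the same three ingredients (conservation of $E$, the pinning $\rho_{\min}\leq 2m\pi/E\leq\rho_{\max}$ of Corollary \ref{cor:2.7.202003}, and the Harnack estimate of Lemma \ref{lem:2.8.202003}) feed the same open--closed continuation argument, which you spell out more explicitly than the paper's terse ``as the flow exists, the evolving curve is always locally convex.'' The only difference is the direction in which you run the Harnack chain: you bound $\rho_{\min}(t)\geq 2m\pi/(EH)$ from below (exactly as the paper does later in Theorem \ref{thm:2.10.202003}, ruling out $\rho\to 0$), whereas the paper's proof of this lemma bounds $\rho_{\max}(t)\leq 2m\pi H/E$ from above to get $\kappa\geq E/(2m\pi H)>0$ directly; both bounds follow from the identical ingredients, so this is a cosmetic rather than substantive divergence.
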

\begin{proof}
By the continuity of the evolving curve, there exists $t_0 > 0$ such that $X(\cdot, t)$ is locally convex on the time interval
$[0, t_0)$. In this same time interval, it follows from Lemma \ref{lem:2.8.202003} and Corollary \ref{cor:2.7.202003},
\begin{eqnarray} \label{eq:2.14.202003}
\rho_{\max} (t) \leq \rho_{\min} (t) H \leq \frac{2m\pi}{E} H.
\end{eqnarray}
So the curvature of the evolving curve satisfies that
\begin{eqnarray*}
\kappa(\theta, t) \geq \kappa_{\min}(t) = \frac{1}{\rho_{\max} (t)} \geq \frac{E}{2m\pi H} >0.
\end{eqnarray*}
As the flow exists, the evolving curve is always locally convex.
\end{proof}

\begin{theorem}\label{thm:2.10.202003}
The flow (\ref{eq:1.4.202003}) exists on the time interval $[0, +\infty)$.
\end{theorem}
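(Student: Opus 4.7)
The plan is to deduce global existence from the a priori estimates already in hand (Lemma~\ref{lem:2.6.202003}, Corollary~\ref{cor:2.7.202003}, Lemma~\ref{lem:2.8.202003}, Lemma~\ref{lem:2.9.202003}) together with a standard contradiction-with-short-time-existence argument. Let $[0,\omega)$ denote the maximal interval on which the smooth solution of (\ref{eq:1.4.202003}) produced by Lemma~\ref{lem:2.3.202003} exists, and suppose for contradiction that $\omega<+\infty$.

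First I would combine Corollary~\ref{cor:2.7.202003} with the Harnack-type estimate of Lemma~\ref{lem:2.8.202003} to obtain the uniform two-sided bound
\begin{equation*}
\frac{2m\pi}{EH}\;\leq\;\rho(\theta,t)\;\leq\;\frac{2m\pi H}{E},\qquad (\theta,t)\in mS^1\times[0,\omega).
\end{equation*}
Together with the uniform spatial derivative bounds $\bigl|\partial^i\rho/\partial\theta^i\bigr|\leq M_i$ furnished by (\ref{eq:2.13.202003}), this keeps $\rho(\cdot,t)$ in a bounded subset of $C^\infty(mS^1)$ uniformly for $t\in[0,\omega)$, and in particular bounded away from zero.

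Next I would upgrade these spatial estimates to full parabolic regularity on $mS^1\times[0,\omega)$. The linear structure of (\ref{eq:2.9.202003}) lets one read off uniform bounds on $\partial^{k+1}\rho/\partial t\,\partial\theta^k$ directly from the spatial derivative bounds; iterating this, together with differentiating the equation further in $t$, gives uniform control of all mixed space-time derivatives. Consequently $\rho(\cdot,t)$ is equicontinuous in each $C^k$ norm as $t\to\omega^-$, and by Arzel\`a--Ascoli it converges in $C^\infty(mS^1)$ to a smooth, strictly positive limit $\rho(\cdot,\omega)$. Via the reconstruction formula (\ref{eq:2.8.202003}) this produces a smooth locally convex curve $X(\cdot,\omega)$.

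Finally, applying Lemma~\ref{lem:2.3.202003} to $X(\cdot,\omega)$ as new initial data yields a smooth solution of (\ref{eq:1.4.202003}) on $[\omega,\omega+\delta)$ for some $\delta>0$, which pieces together with the original solution to extend it past $\omega$. This contradicts the maximality of $\omega$, so $\omega=+\infty$.

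The substantive work has really been done already: once the Harnack-type estimate of Lemma~\ref{lem:2.8.202003} and the uniform derivative bounds (\ref{eq:2.13.202003}) are in force, the remainder is the standard extension argument. The only point requiring a bit of care is justifying that the uniform spatial $C^k$ bounds, combined with the linear equation (\ref{eq:2.9.202003}), actually produce a smooth limit $\rho(\cdot,\omega)$ in $C^\infty(mS^1)$ rather than merely a weak one; this is handled by using the equation to bound the time modulus of continuity of $\rho$ and of each $\partial^i_\theta\rho$, and then invoking Arzel\`a--Ascoli in each $C^k$ topology.
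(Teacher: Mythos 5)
Your proposal is correct and rests on exactly the same a priori estimates as the paper: the two-sided bound on $\rho$ obtained by combining Corollary~\ref{cor:2.7.202003} with the Harnack estimate of Lemma~\ref{lem:2.8.202003}, together with the uniform derivative bounds (\ref{eq:2.13.202003}). The only difference is in the final step: the paper concludes directly by observing that $\rho$ stays uniformly bounded, bounded away from zero, and smooth, so the velocity field never degenerates and $X(\cdot,t)$ remains smooth upon integrating the flow equation; your maximal-interval contradiction argument, with the Arzel\`a--Ascoli passage to a smooth positive limit $\rho(\cdot,\omega)$ and reapplication of the short-time existence Lemma~\ref{lem:2.3.202003}, is the standard rigorous packaging of that same conclusion and is, if anything, a more careful way to finish.
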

\begin{proof}
By (\ref{eq:2.13.202003}), all higher derivatives $\frac{\partial^i \rho}{\partial \theta^i}$ is uniformly bounded.
It suffices to show that $\rho(\theta, t)>0$ for all $(\theta, t)\in mS^1 \times [0, +\infty)$.
The Harnack estimate  (\ref{eq:2.14.202003}) and the second inequality of (\ref{eq:2.3.202003}) imply that
\begin{eqnarray} \label{eq:2.15.202003}
\rho_{\min}(t) \geq \rho_{\max} (t) \frac{1}{H} \geq \frac{2m\pi}{EH} >0.
\end{eqnarray}
Hence, the curvature $\kappa = \frac{1}{\rho}$ never blows up as time goes.
Until now we have shown that the function $\rho(\theta, t)$ is uniformly bounded and positive on the domain $mS^1 \times [0, +\infty)$.
So the velocity of the flow  (\ref{eq:1.4.202003}) $\frac{\partial X}{\partial t}$ is smooth for all $t>0$.
Integrating the equation (\ref{eq:1.4.202003})
\begin{eqnarray*}
X(\theta, t) = X_0(\theta) + \int_0^t \frac{\partial X}{\partial t} (\theta, \widetilde{t}) d\widetilde{t},
\end{eqnarray*}
we know the evolving curve $X(\cdot, t)$ is smooth for every $t\in [0, +\infty)$.
The flow (\ref{eq:1.4.202003}) exists globally.
\end{proof}

There are two key steps in the proof of Theorem \ref{thm:2.10.202003}.
The first one is that the derivative $\frac{\partial \rho}{\partial \theta}$
has a uniform bound under the flow (see the equation (\ref{eq:2.13.202003})). This fact holds because we have the term
$2p(\theta, t) - 2\widetilde{p}(\theta, t)$ in the flow equation (\ref{eq:1.4.202003}).
The second step is that the flow preserves the elastic energy.
The construction of the nonlocal term $f(t)$ in the flow equation guarantees this property.

\section{Convergence}
We first prove the convergence of the radius of curvature. Then we show the convergence of the support function, which implies
the convergence of the evolving curve $X(\cdot, t)$ as the time $t \rightarrow +\infty$.

\begin{lemma}\label{lem:3.1.202003}
Under the flow (\ref{eq:1.4.202003}), the radius of the curvature $\rho (\cdot, t)$ of the evolving curve converges
as $t \rightarrow +\infty$. If $X_0$ and $\widetilde{X}$ have same elastic energy then
\begin{eqnarray} \label{eq:3.1.202003}
\lim_{t \rightarrow +\infty} \rho(\theta, t) = \widetilde{\rho}(\theta).
\end{eqnarray}
\end{lemma}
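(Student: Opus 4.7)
The plan is to study $u(\theta,t) := \rho(\theta,t) - \widetilde{\rho}(\theta)$ and reduce the lemma to the convergence of its spatial average. From (\ref{eq:2.7.202003}) this function satisfies
\begin{equation*}
\frac{\partial u}{\partial t} = \frac{\partial^2 u}{\partial\theta^2} - u - f(t).
\end{equation*}
Because $f(t)$ depends only on time, differentiating once or more in $\theta$ eliminates it, and the exponential decay estimate (\ref{eq:2.12.202003}) of Lemma \ref{lem:2.8.202003} yields $|\partial_\theta^{\,i} u|\leq \sqrt{C_i}\,e^{-t}$ for every $i\geq 1$. Setting $\bar{u}(t):=\frac{1}{2m\pi}\int_{mS^1} u(\theta,t)\,d\theta$ and choosing $\theta_0$ with $u(\theta_0,t)=\bar{u}(t)$ (possible by periodicity), the fundamental theorem of calculus gives
\begin{equation*}
|u(\theta,t)-\bar{u}(t)| \leq 2m\pi\,\sqrt{C_1}\,e^{-t},
\end{equation*}
so $u$ is exponentially close to a function of $t$ alone.

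Next I would derive an ODE for $\bar{u}(t)$. Averaging the equation for $u$ over $mS^1$ kills the $\partial^2_\theta u$ term and gives $\bar{u}'(t) = -\bar{u}(t) - f(t)$. Substituting $\rho-\widetilde{\rho}=u$ and $\partial^2_\theta\rho - \partial^2_\theta\widetilde{\rho}=\partial^2_\theta u$ into (\ref{eq:1.6.202003}) yields
\begin{equation*}
f(t) = \frac{1}{E}\Bigl[\int_{mS^1}\kappa^2\,\partial^2_\theta u\,d\theta \;-\; \int_{mS^1}\kappa^2 u\,d\theta\Bigr],
\end{equation*}
where the denominator equals the conserved elastic energy $E$ by Lemma \ref{lem:2.6.202003}. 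The first integral is $O(e^{-t})$ by the bound on $\partial^2_\theta u$ together with the uniform upper bound on $\kappa$ from Lemma \ref{lem:2.9.202003}. Writing $u = \bar{u}(t) + (u-\bar{u}(t))$ in the second integral and using $\int_{mS^1}\kappa^2 d\theta = E$, I obtain $\int_{mS^1}\kappa^2 u\,d\theta = \bar{u}(t)\,E + O(e^{-t})$, hence $f(t) = -\bar{u}(t) + O(e^{-t})$. The two occurrences of $\bar{u}$ in the ODE then cancel and one is left with $|\bar{u}'(t)| \leq C e^{-t}$, which implies that $\bar{u}(t)$ is Cauchy and converges to some constant $c$ as $t\to+\infty$. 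Combined with the exponential decay of every $\partial^i_\theta u$ and of $u-\bar{u}$, this gives $\rho(\theta,t)\to\widetilde{\rho}(\theta)+c$ in $C^\infty(mS^1)$, which proves the first assertion.

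Finally, when $X_0$ and $\widetilde{X}$ share the same elastic energy, passing to the limit in $\int_{mS^1}\rho(\theta,t)^{-1}d\theta = E$ and using uniform convergence gives
\begin{equation*}
\int_{mS^1}\frac{d\theta}{\widetilde{\rho}(\theta)+c} = \int_{mS^1}\frac{d\theta}{\widetilde{\rho}(\theta)},
\end{equation*}
and strict monotonicity of $c\mapsto\int(\widetilde{\rho}+c)^{-1}d\theta$ forces $c=0$, giving (\ref{eq:3.1.202003}). The main obstacle, and the real content of the argument, is the cancellation $\bar{u}'(t) = -\bar{u}(t) - f(t) = O(e^{-t})$: without exploiting that $u$ is exponentially close to its spatial mean, one only knows $|f(t)|$ is bounded, which is insufficient for convergence of $\bar{u}$. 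The spatial decay from Lemma \ref{lem:2.8.202003} is precisely what drives this cancellation, while the preserved elastic energy is the invariant that pins down the limit.
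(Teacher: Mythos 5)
Your argument is correct, but it establishes the crucial step --- convergence of the constant mode of $\rho-\widetilde{\rho}$ --- by a genuinely different mechanism than the paper. The paper argues softly: the uniform bounds on $\rho$ and its derivatives give precompactness, the exponential decay of $\partial_\theta(\rho-\widetilde{\rho})$ forces every subsequential limit to have the form $\widetilde{\rho}+c_0$, and conservation of the elastic energy pins down $c_0$ uniquely via $E=\int_{mS^1}(\widetilde{\rho}+c_0)^{-1}d\theta$, so all subsequential limits coincide and the whole family converges. You instead derive the ODE $\bar{u}'=-\bar{u}-f(t)$ for the spatial mean and show that the nonlocal term satisfies $f(t)=-\bar{u}(t)+O(e^{-t})$, so the damping and the forcing cancel and $\bar{u}$ converges directly, with an exponential rate that the paper's compactness argument does not exhibit. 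Both proofs rest on the same two inputs (the decay estimate (\ref{eq:2.12.202003}) and conservation of $E$), but your route isolates exactly why the flow was built with this particular $f(t)$: it is the cancellation in the mean equation. Two minor slips, neither fatal. First, the denominator of $f(t)$ is $\int_{mS^1}\kappa^2\,d\theta$, which is \emph{not} the elastic energy: since $ds=\rho\,d\theta$, one has $E=\int\kappa^2\,ds=\int_{mS^1}\kappa\,d\theta$, as used in the proof of Lemma \ref{lem:2.6.202003}. This is harmless because the same factor $\int_{mS^1}\kappa^2\,d\theta$ multiplies $\bar{u}$ in the numerator and cancels upon division; all you actually need is that this integral is bounded away from zero, which follows from the uniform positive lower bound on $\kappa$. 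Second, the uniform upper bound on $\kappa$ that you invoke comes from the lower bound on $\rho_{\min}$ in (\ref{eq:2.15.202003}) rather than from Lemma \ref{lem:2.9.202003} itself, which only gives the lower bound on $\kappa$.
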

\begin{proof}
By (\ref{eq:2.11.202003}), (\ref{eq:2.13.202003}) and (\ref{eq:2.14.202003}),
both $\rho$ and $|\frac{\partial \rho}{\partial \theta}|$ are uniformly bounded by constants.
There exits a convergent subsequence $\rho(\theta, t_i)$ as $t_i \rightarrow +\infty$. Let $\rho_{\infty}(\theta)$ be the limit
of $\rho(\theta, t_i)$. By (\ref{eq:2.13.202003}), $\rho_{\infty}(\theta)$ is smooth and for any positive integer $k$,
\begin{eqnarray} \label{eq:3.2.202003}
\lim_{t_i \rightarrow +\infty} \frac{\partial^k \rho}{\partial \theta^k}(\theta, t_i)
=  \frac{\partial^k \rho_\infty}{\partial \theta^k}(\theta).
\end{eqnarray}
It follows from (\ref{eq:2.12.202003}) that, for every fixed $\theta \in mS^1$,
\begin{eqnarray} \label{eq:3.3.202003}
\lim\limits_{t \rightarrow +\infty} \frac{\partial \rho}{\partial \theta}(\theta, t)
=  \frac{\partial \widetilde{\rho}}{\partial \theta}(\theta)
\end{eqnarray}
So, combining (\ref{eq:3.2.202003}) and (\ref{eq:3.3.202003}), we have
\begin{eqnarray*}
\frac{\partial \rho_\infty}{\partial \theta}(\theta) = \lim_{t_i \rightarrow +\infty} \frac{\partial \rho}{\partial \theta}(\theta, t_i)
=  \frac{\partial \widetilde{\rho}}{\partial \theta}(\theta).
\end{eqnarray*}
By (\ref{eq:2.7.202003}) and (\ref{eq:2.10.202003}),  $\rho_\infty(\theta)$ is uniformly bounded. There is a constant $c_0$ such that
\begin{eqnarray} \label{eq:3.4.202003}
\rho_\infty(\theta) = \widetilde{\rho}(\theta) + c_0,
\end{eqnarray}
where $\theta \in mS^1$.
Recall that the flow (\ref{eq:1.4.202003}) preserves the elastic energy $E$. The constant $c_0$ is uniquely determined by
\begin{eqnarray} \label{eq:3.5.202003}
E = \int_{mS^1} \frac{1}{\widetilde{\rho}(\theta) + c_0} d\theta.
\end{eqnarray}
Since every convergent subsequence of $\rho(\cdot, t)$ tends to the fixed limit $\widetilde{\rho}(\cdot) + c_0$,
$\rho(\cdot, t)$ itself converges to the same limit:
\begin{eqnarray} \label{eq:3.6.202003}
\lim_{t \rightarrow +\infty} \rho(\theta, t) = \widetilde{\rho}(\theta) + c_0.
\end{eqnarray}

Suppose $X_0$ and $\widetilde{X}$ have same elastic energy. Noticing that the flow (\ref{eq:1.4.202003}) preserves the elastic energy,
one gets from (\ref{eq:3.4.202003}) and (\ref{eq:3.5.202003}) that
\begin{eqnarray} \label{eq:3.7.202003}
\int_{mS^1} \frac{1}{\widetilde{\rho}(\theta) + c_0} d\theta
=\int_{mS^1} \frac{1}{\rho_{\infty}(\theta)} d\theta = E
=\int_{mS^1} \frac{1}{\rho_0(\theta)} d\theta
= \int_{mS^1} \frac{1}{\widetilde{\rho}(\theta)} d\theta.
\end{eqnarray}
Comparing the both sides, one obtains that the constant $c_0 =0$.
By (\ref{eq:3.7.202003}), $\rho(\theta, t)$ converges to $\widetilde{\rho}(\theta)$
as $t \rightarrow +\infty$. It follows from  (\ref{eq:2.13.202003}) and (\ref{eq:2.12.202003}) that this convergence
is in the sense of $C^\infty$ metric, i.e., we have the limit
\begin{eqnarray*}
\lim_{t \rightarrow +\infty} \frac{\partial^i \rho}{\partial \theta^i}(\theta, t)
= \frac{\partial^i \widetilde{\rho}}{\partial \theta^i}(\theta).
\end{eqnarray*}
for each integer $i = 1, 2, \cdots$.
\end{proof}

As a corollary of the convergence of $\rho(\theta, t)$, we know that the nonlocal term $f(t)$ in the flow
(\ref{eq:1.4.202003}) converges:
\begin{eqnarray*}
\lim_{t \rightarrow +\infty} f(t) = 0.
\end{eqnarray*}

\begin{theorem}\label{thm:3.2.202003}
The evolving curve under the flow (\ref{eq:1.4.202003}) converges to the target curve $\widetilde{X}$,
if $X_0$ and $\widetilde{X}$ have same elastic energy.
\end{theorem}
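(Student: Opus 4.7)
My plan is to reduce convergence of $X(\cdot,t)$ to convergence of its support function $p(\cdot,t)$, and then to attack the support function by a maximum-principle argument on a linear equation with decaying forcing. Since (2.5) freezes $T$, $N$, and $\theta$ under the flow, differentiating $p=-\langle X,N\rangle$ and using (1.3) yields the identity $X(\theta,t)=p_\theta(\theta,t)\,T(\theta)-p(\theta,t)\,N(\theta)$ throughout the evolution, so $C^\infty$ convergence $p\to\widetilde{p}$ entails $C^\infty$ convergence $X\to\widetilde{X}$. Setting $v:=p-\widetilde{p}$ and combining (2.6) with the static identity $\widetilde{\rho}=\widetilde{p}_{\theta\theta}+\widetilde{p}$ obtained by applying (1.3) to $\widetilde{X}$, the terms containing $\widetilde{p}$ and $\widetilde{\rho}$ collapse and $v$ satisfies the clean linear equation
\begin{equation*}
\frac{\partial v}{\partial t}=\frac{\partial^2 v}{\partial \theta^2}-v-f(t)\qquad\text{on }mS^1\times[0,+\infty),
\end{equation*}
with $f(t)\to 0$ as $t\to+\infty$ by the observation recorded immediately after Lemma 3.1.

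I would then run the maximum principle on this equation. With $M(t):=\max_\theta v(\cdot,t)$, at a maximum point $v_{\theta\theta}\le 0$, so Hamilton's trick gives $M'(t)\le -M(t)-f(t)$, and integration yields $M(t)\le M(0)e^{-t}-\int_0^t e^{-(t-s)}f(s)\,ds$. Splitting the integral at a time after which $|f|$ is small shows the right-hand side tends to $0$, so $\limsup_{t\to\infty}M(t)\le 0$; the symmetric argument for $m(t):=\min_\theta v(\cdot,t)$ gives $\liminf_{t\to\infty}m(t)\ge 0$, so $v(\cdot,t)\to 0$ uniformly on $mS^1$. For each $i\ge 1$, $\theta$-differentiating the $v$-equation eliminates the $\theta$-independent forcing, so $w:=\partial_\theta^i v$ obeys the pure linear heat equation $w_t=w_{\theta\theta}-w$, and the maximum principle at once gives $\|w(\cdot,t)\|_\infty\le\|w(\cdot,0)\|_\infty e^{-t}$. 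Combining these yields $v\to 0$ in $C^\infty$, hence $p\to\widetilde{p}$ and $X(\cdot,t)\to\widetilde{X}(\cdot)$ in $C^\infty$.

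The delicate point I anticipate is the zeroth-order bound on $v$: the homogeneous part of the equation does not on its own force $v$ to a specific limit, because the $\theta$-independent forcing $f(t)$ couples exactly to the spatial mean of $v$, which the reaction term $-v$ alone cannot pin down. The argument therefore hinges essentially on $f(t)\to 0$, which is available only because the nonlocal term (1.6) was engineered to preserve the elastic energy and the two energies coincide; without this alignment, the mean of $v$ could drift to a nonzero constant and $X(\cdot,t)$ would converge only to a curve with the same shape but a different ``size'' than $\widetilde{X}$.
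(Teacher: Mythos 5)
Your proposal is correct, and it reaches the conclusion by a genuinely different mechanism than the paper at the one step that matters. Both arguments start identically: freeze the frame via (\ref{eq:2.5.202003}), write $X=p_\theta T-pN$, and reduce everything to the support function; and both handle the derivatives $\partial_\theta^i(p-\widetilde p)$, $i\ge 1$, the same way, by noting that differentiation kills the $\theta$-independent forcing and the maximum principle gives exponential decay. The difference is in pinning down the zeroth-order (mean) mode of $v=p-\widetilde p$. The paper mimics Lemma \ref{lem:3.1.202003}: it extracts a convergent subsequence, uses the decay of $\partial_\theta v$ to conclude the subsequential limit is $\widetilde p+c$, and then identifies $c=c_0=0$ through the relation $\rho=p_{\theta\theta}+p$ and the already-established limit of $\rho$. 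You instead run an ODE comparison $M'(t)\le -M(t)-f(t)$ on the full equation $v_t=v_{\theta\theta}-v-f(t)$ and exploit $f(t)\to 0$; this is more direct and quantitative (no subsequences, explicit integral bound), but it is not independent of Lemma \ref{lem:3.1.202003}, since the statement $f(t)\to 0$ is itself a consequence of $\rho\to\widetilde\rho$ there --- indeed in general $f(t)\to -c_0$, so the vanishing of the limit of $f$ already encodes the equal-energy hypothesis, exactly as your closing paragraph observes. Net effect: the two proofs consume the same input (Lemma \ref{lem:3.1.202003} plus energy preservation) but route it differently; yours trades the paper's compactness-plus-identification step for a cleaner comparison argument, at the cost of leaning on the asserted limit of $f$ rather than on the pointwise relation between $p$ and $\rho$.
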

\begin{proof}
By (\ref{eq:1.2.202003}) and (\ref{eq:1.3.202003}), the evolving curve $X(\cdot, t)$ is uniquely determined by its
support function
\begin{eqnarray*}
X(\theta, t) =\frac{\partial p}{\partial \theta}(\theta, t) T(\theta, t) - p(\theta, t) N(\theta, t).
\end{eqnarray*}
So it suffices to prove the convergence of $p(\cdot, t)$.

Under the flow (\ref{eq:1.4.202003}), the support function of the evolving curve satisfies the
equation (\ref{eq:2.6.202003}). Mimicing the proof of Lemma \ref{lem:3.1.202003},
one can show that
\begin{eqnarray} \label{eq:3.8.202003}
\lim_{t \rightarrow +\infty} \frac{\partial^i p}{\partial \theta^i}(\theta, t)
=  \frac{\partial^i \widetilde{p}}{\partial \theta^i}(\theta),
\end{eqnarray}
where $i= 1, 2, 3, \cdots$.
Therefore the equation (\ref{eq:3.7.202003}) together with the relation between $p$ and $\rho$ imply that
\begin{eqnarray} \label{eq:3.9.202003}
\lim_{t \rightarrow +\infty} p(\theta, t) = \widetilde{p}(\theta) + c_0,
\end{eqnarray}
where constant $c_0$ is uniquely determined by (\ref{eq:3.5.202003}).
Combining (\ref{eq:3.8.202003}) with (\ref{eq:3.9.202003}), we know that $p(\cdot, t)$
converges to $\widetilde{p}(\theta, t)$ in the sense of $C^\infty$ metric.

If $X_0$ and $\widetilde{X}$ have same elastic energy then it follows from (\ref{eq:3.7.202003}) that $c_0 =0$.
The convergence of the support function implies that the evolving curve $X(\cdot, t)$ converges to
the curve $\widetilde{X}$:
\begin{eqnarray*}
\lim_{t \rightarrow +\infty} X(\theta, t) = \widetilde{X}(\theta).
\end{eqnarray*}
The higher order convergence of the evolving curve follows from the
$C^\infty$ convergence of the support function directly.
\end{proof}

Combining Theorem \ref{thm:2.10.202003} with Theorem \ref{thm:3.2.202003}, one immediately proves
Theorem \ref{thm:1.2.202003}.

\begin{corollary}\label{lem:3.3.202003}
Let $X_0$ be an initial locally convex curve with winding number $m$ and elastic energy $E$.
Let $Y_0$ be a target convex curve with elastic energy $\frac{E}{m}$. The flow (\ref{eq:1.4.202003}) can deform
$X_0$ into the $m$-fold convex curve $mY_0$ as $t \rightarrow +\infty$.
\end{corollary}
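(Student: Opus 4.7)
The plan is to apply Theorem \ref{thm:1.2.202003} directly, taking the target to be $\widetilde{X}:=mY_0$, i.e., the curve $Y_0$ traversed $m$ times. First I would check that this choice is admissible for the flow (\ref{eq:1.4.202003}): since $Y_0$ is convex and smooth, its winding number is $1$ and its curvature is positive everywhere; traversing it $m$ times yields a smooth, closed, locally convex curve whose winding number is $m$, matching that of $X_0$, so the tangent angle $\theta\in mS^1$ is a valid common parameter for both curves.

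Next I would verify that $X_0$ and $\widetilde{X}=mY_0$ have the same elastic energy. This is the key numerical check, and it reduces to the observation that the elastic energy is additive under retracing: if $Y_0$ has curvature $\kappa_{Y_0}$ along a single loop, then
\begin{eqnarray*}
E(mY_0)=\int_{mY_0}\kappa^2\,ds = m\int_{Y_0}\kappa_{Y_0}^2\,ds = m\cdot \frac{E}{m}=E,
\end{eqnarray*}
which coincides with the elastic energy of $X_0$.

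With these two compatibilities in hand—same winding number and same elastic energy—Theorem \ref{thm:1.2.202003} applies with initial curve $X_0$ and target $\widetilde{X}=mY_0$. It guarantees that the flow (\ref{eq:1.4.202003}) exists globally, preserves local convexity and elastic energy, and that the evolving curve $X(\cdot,t)$ converges in the $C^\infty$ metric to $\widetilde{X}=mY_0$ as $t\to+\infty$, which is exactly the conclusion.

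There is no real obstacle here beyond recognizing the elastic-energy scaling $E(mY_0)=mE(Y_0)$; the rest is just bookkeeping to check that the hypotheses of Theorem \ref{thm:1.2.202003} are met by the pair $(X_0,mY_0)$. The content of the corollary is that Theorem \ref{thm:1.2.202003} allows a genuinely non-convex locally convex initial curve (winding $m>1$) to be deformed onto an $m$-fold cover of a convex curve, provided one rescales the convex target so that its single-loop elastic energy is $E/m$.
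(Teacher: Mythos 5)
Your proposal is correct and is exactly the argument the paper intends: the corollary is stated without proof as a direct application of Theorem \ref{thm:1.2.202003} to the target $\widetilde{X}=mY_0$, and your two checks --- that $mY_0$ is smooth, locally convex, with winding number $m$, and that $E(mY_0)=m\cdot\frac{E}{m}=E$ --- are precisely what is needed. Nothing is missing.
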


\begin{remark}\label{rem:3.4.202003}
Let $X_0$ and $Y_0$ be any two locally convex curves with same wingding number. There is a proper rescaling of $Y_0$,
denoted by $\widetilde{Y}$, such that $X_0$ and $\widetilde{Y}$ have same elastic energy.
By Theorem \ref{thm:1.2.202003}, $X_0$ can be deformed into $\widetilde{Y}$
under the flow (\ref{eq:1.4.202003}). Therefore, up to a rescaling,
any two locally convex curves with same wingding number can be evolved into
each other via the curvature flow (\ref{eq:1.4.202003}).
\end{remark}

\section{An Example}
In this section, an example of the flow (\ref{eq:1.4.202003}) is investigated.
Let $\widetilde{X}$ be a locally convex curve with support function
$$\widetilde{p} = 10 + 10\sin\left(\frac{2}{3}\theta\right)+10\cos\left(\frac{2}{3}\theta\right),$$
where $\theta \in [0, 6\pi]$.
With the help of MATLAB, one can calculate that the minimum value of its radius of the curvature is 1.7725.
We choose a locally convex curve $X_0$ with support function
$$p_0 = 11.6720 + 9\sin\left(\frac{4}{3}\theta\right)+9\cos\left(\frac{4}{3}\theta\right),$$
where $\theta \in [0, 6\pi]$.
The minimum value of its radius of the curvature is  2.1433.
The two curves have same winding number 3 and same elastic energy 3.0463. See the figures (up to proper
rotations) of the two curves in Figure \ref{fig:1}.
\begin{figure}[tbh]
\centering
\subfloat[The Initial Curve $X_0$]{\scalebox{0.4}{\includegraphics{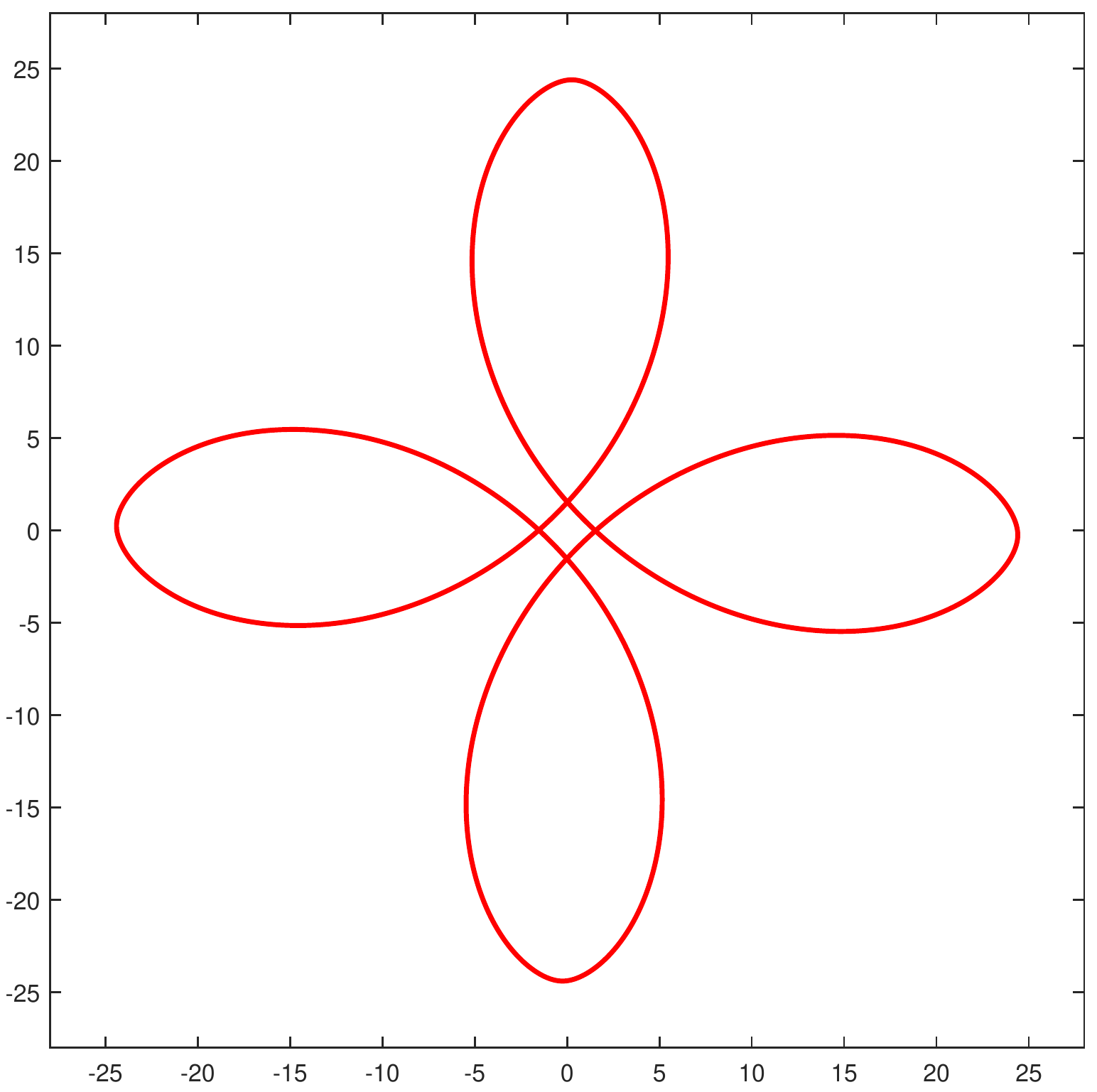}}}~~~
\subfloat[The Target Curve $\widetilde{X}$]{\scalebox{0.4}{\includegraphics{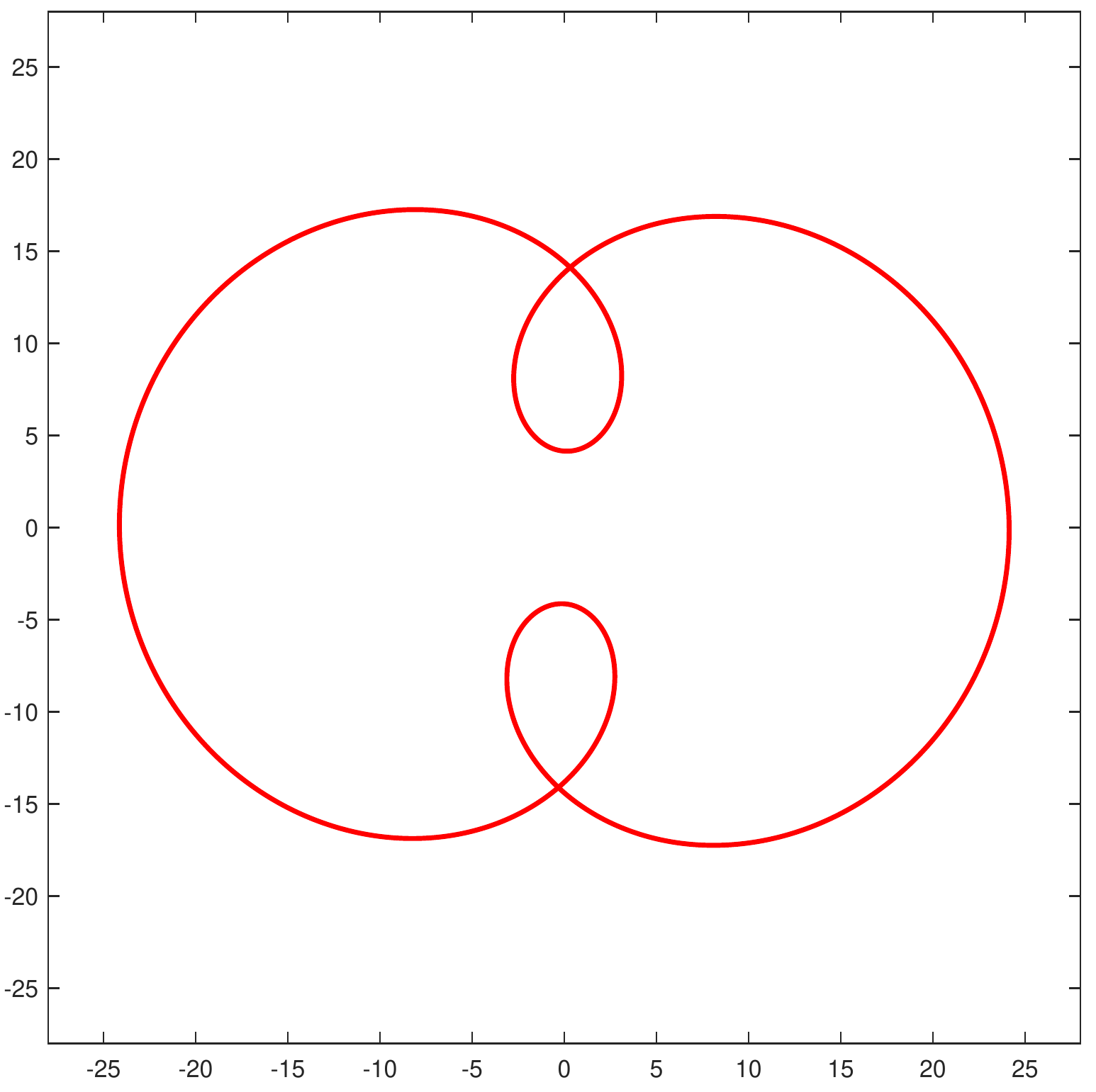}}}
\caption{} \label{fig:1}
\end{figure}

\begin{example}\label{rem:3.3.202003}
Let $\widetilde{X}$ be the target curve and let $X_0$ evolve according to the flow (\ref{eq:1.4.202003}).
By Theorem \ref{thm:1.2.202003}, the evolving curve $X(\cdot, t)$ is smooth on the time interval $[0, +\infty)$ and
it converges to the curve $\widetilde{X}$ as $t \rightarrow +\infty$. Some evolving curves (up to proper
rotations) are presented in Figure \ref{fig:2.202002} and some relative geometric quantities are given in Table \ref{tab:1}.
\end{example}
\begin{table}[tbh]
  \centering   \caption{Some Geometric Quantities of the Evolving Curve}\label{tab:1}
  \begin{tabular*}{10.1cm}{|c|c|c|c|c|}
     \hline
     Time $t$   & $\frac{L(t)}{6\pi}$       & $\rho_{\min}(t)$   & $\rho_{\max}(t)$  & Elastic Energy\\
     \hline
     0             & 11.6720                  & 1.7725           &  21.5715          & 3.0463\\
     \hline
     0.01         & 11.44718                  & 1.7148           &  21.1187          & 3.0463\\
     \hline
     0.05         & 10.69347                  & 1.5713           &  19.5224          & 3.0463\\
     \hline
     0.1          & 9.9949                    & 1.5175           &  17.9135          & 3.0463\\
     \hline
     0.2          & 9.0992                    & 1.5865           &  15.6041          & 3.0463\\
     \hline
     0.4          & 8.3953                    & 1.8973           &  13.3267          & 3.0463\\
     \hline
     0.6          & 8.3667                    & 2.1597           &  12.9290          & 3.0463\\
     \hline
     1            & 8.8301                    & 2.3431           &  14.4828          & 3.0463\\
     \hline
     2            & 9.6740                    & 2.2271           &  17.0667          & 3.0463\\
     \hline
     4            & 9.9810                    & 2.1485           &  17.8133          & 3.0463\\
     \hline
     $+\infty$    & 10                        & 2.1433           &  17.8567          & 3.0463\\
     \hline
  \end{tabular*}
\end{table}

\begin{figure}[tbh]
\centering
\subfloat[$t=0.01$]{\scalebox{0.32}{\includegraphics{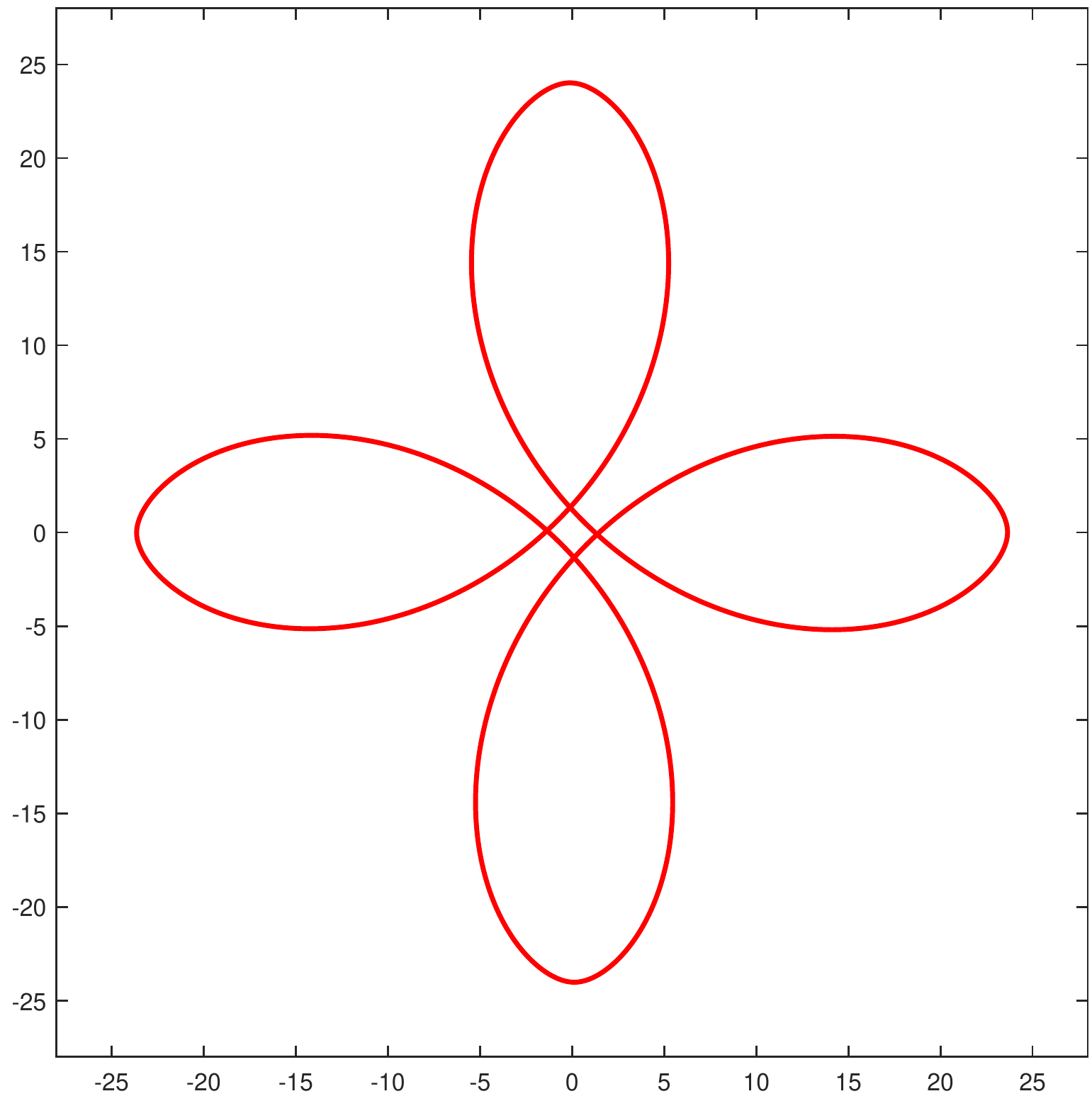}}}
\subfloat[$t=0.05$]{\scalebox{0.32}{\includegraphics{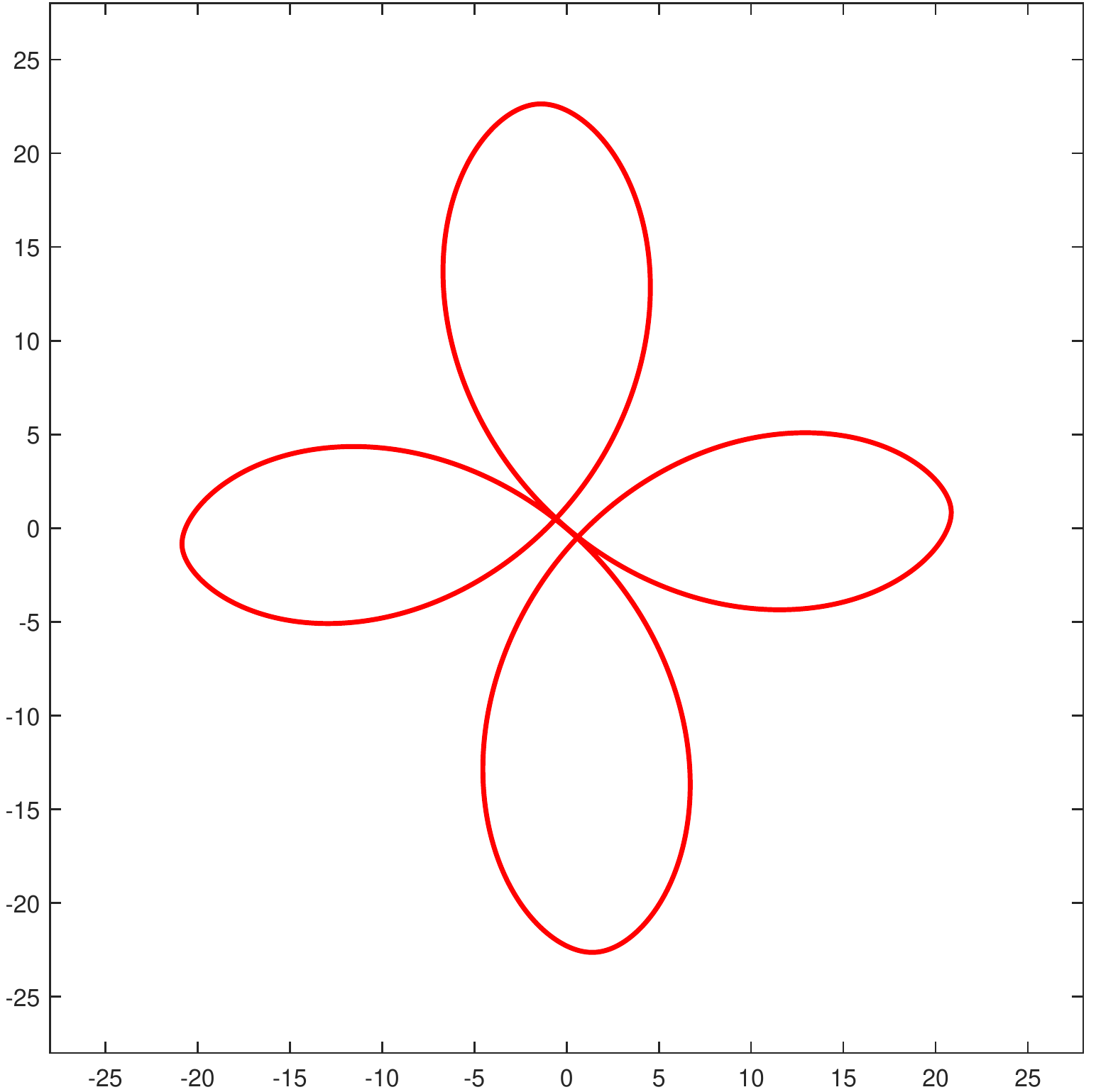}}}
\subfloat[$t=0.1$ ]{\scalebox{0.32}{\includegraphics{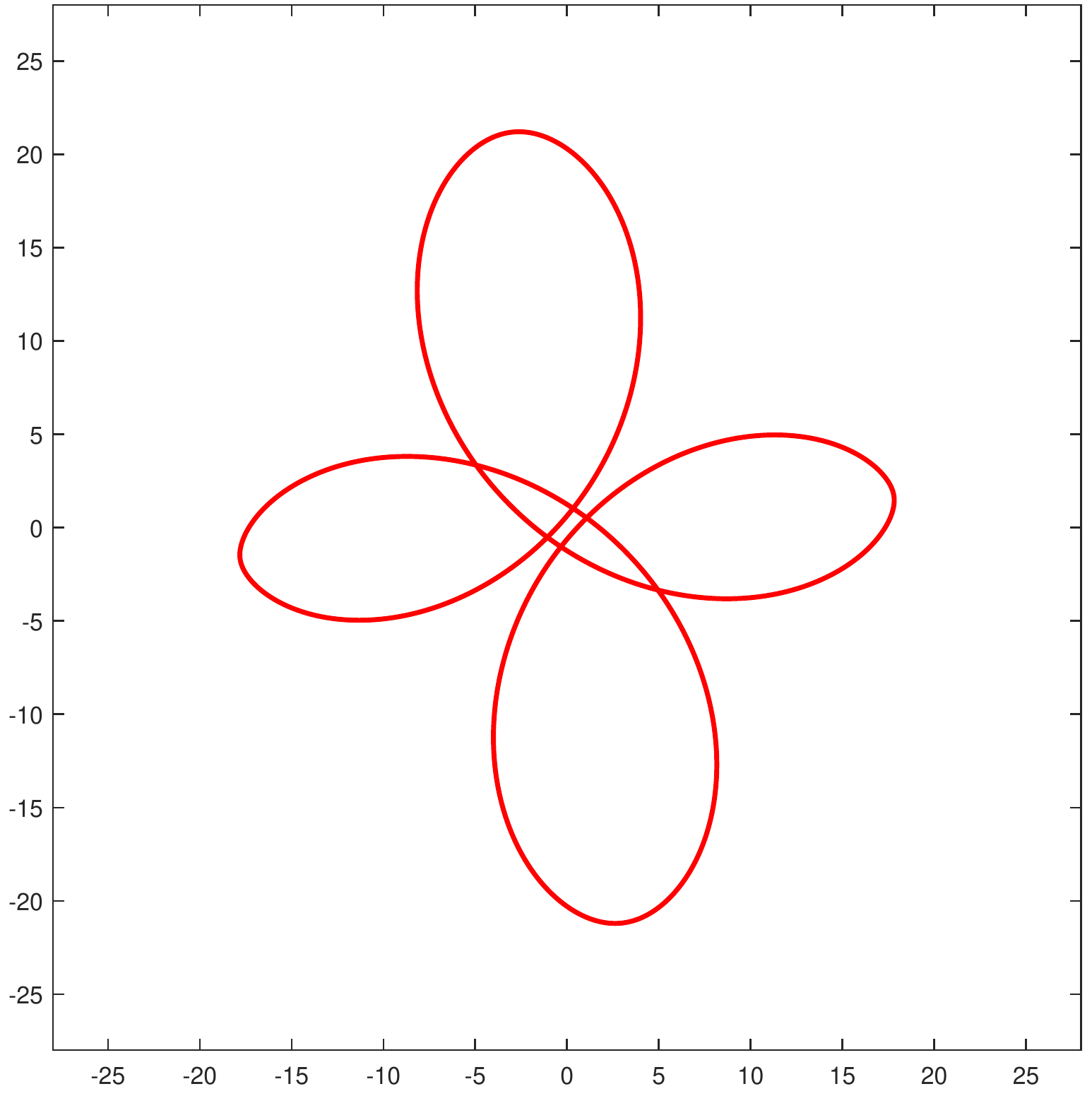}}}\
\subfloat[$t=0.2$ ]{\scalebox{0.32}{\includegraphics{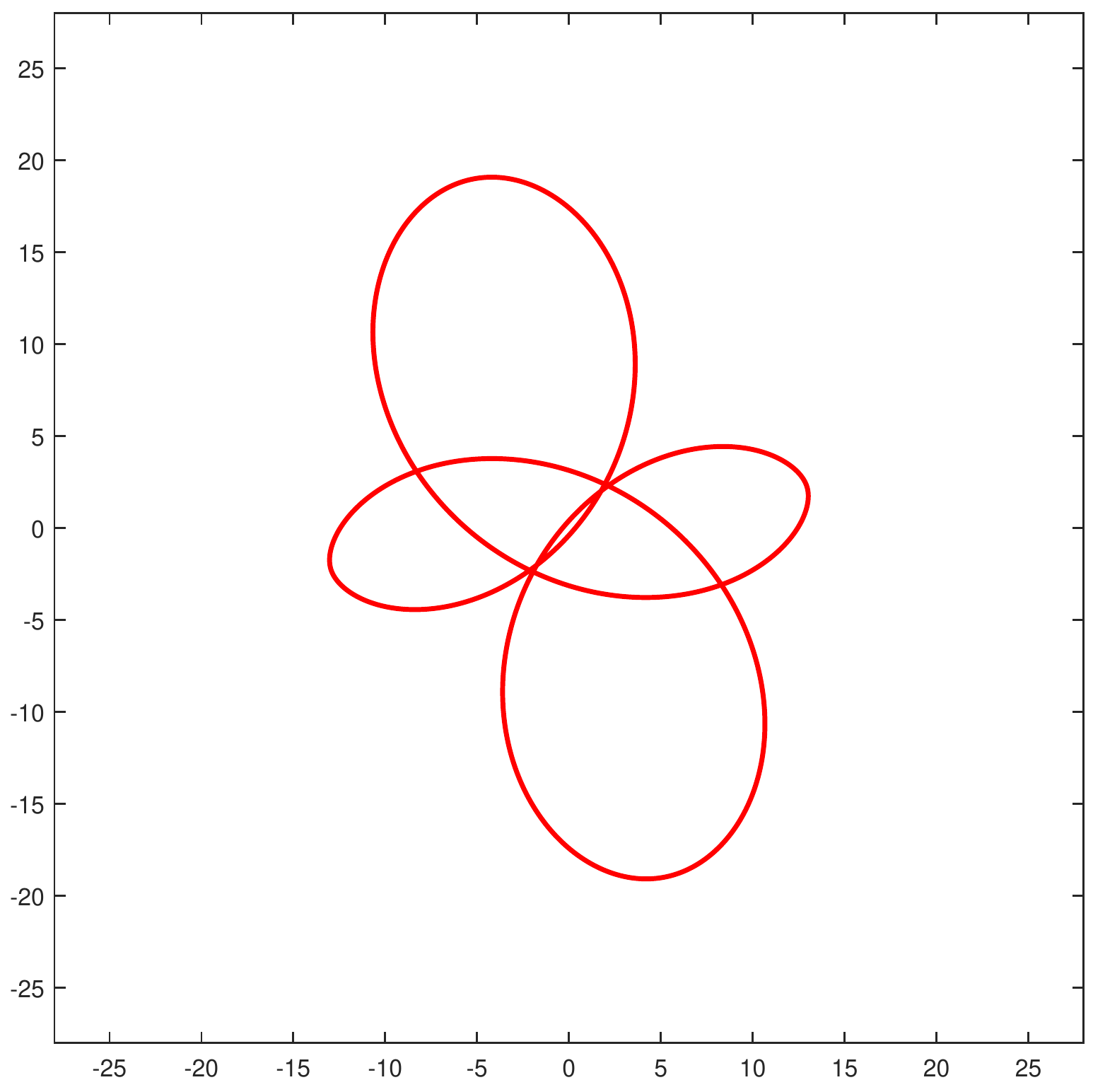}}}
\subfloat[$t=0.4$ ]{\scalebox{0.32}{\includegraphics{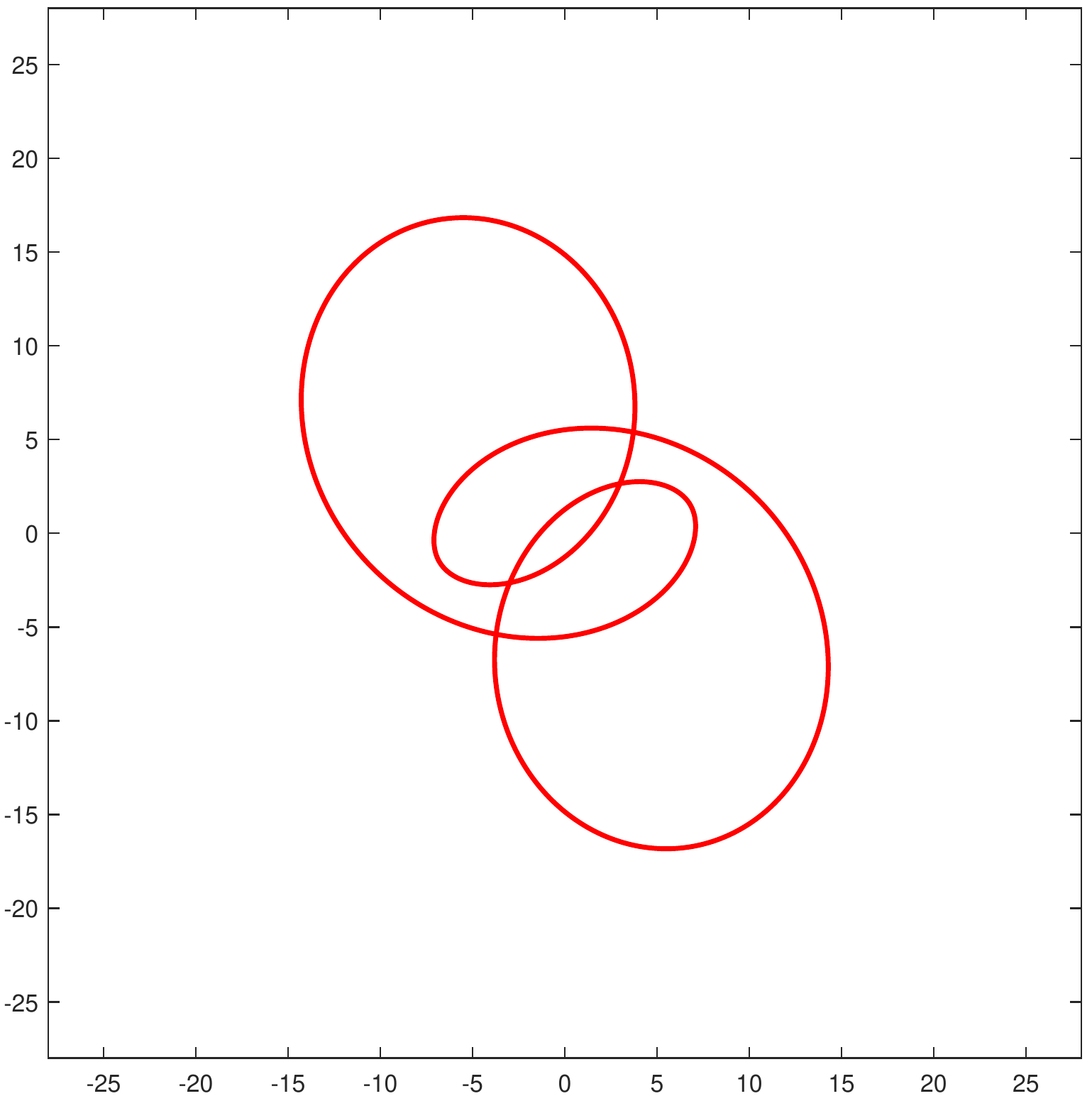}}}
\subfloat[$t=0.6$ ]{\scalebox{0.32}{\includegraphics{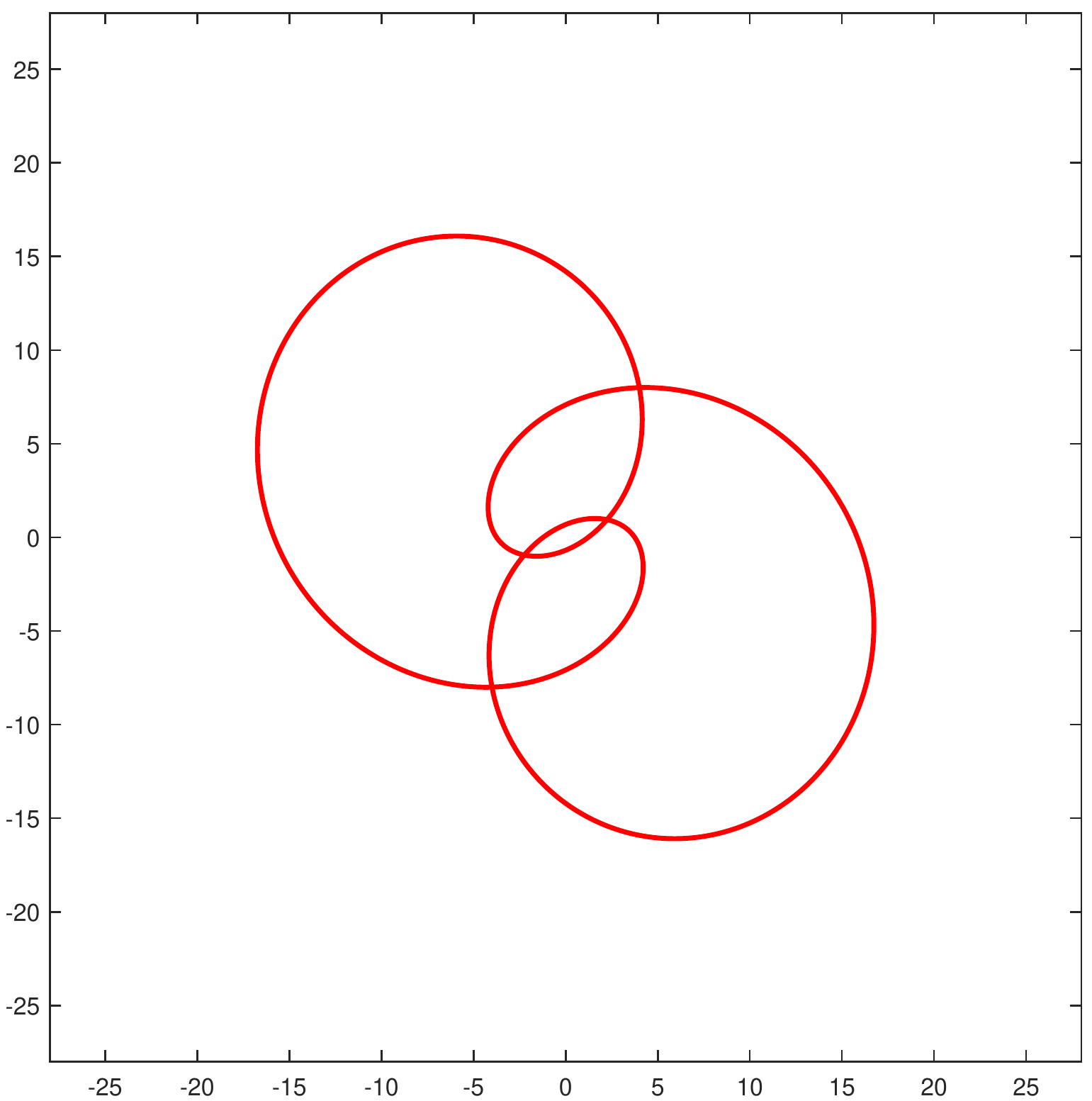}}}\
\subfloat[$t=1$ ]{\scalebox{0.32}{\includegraphics{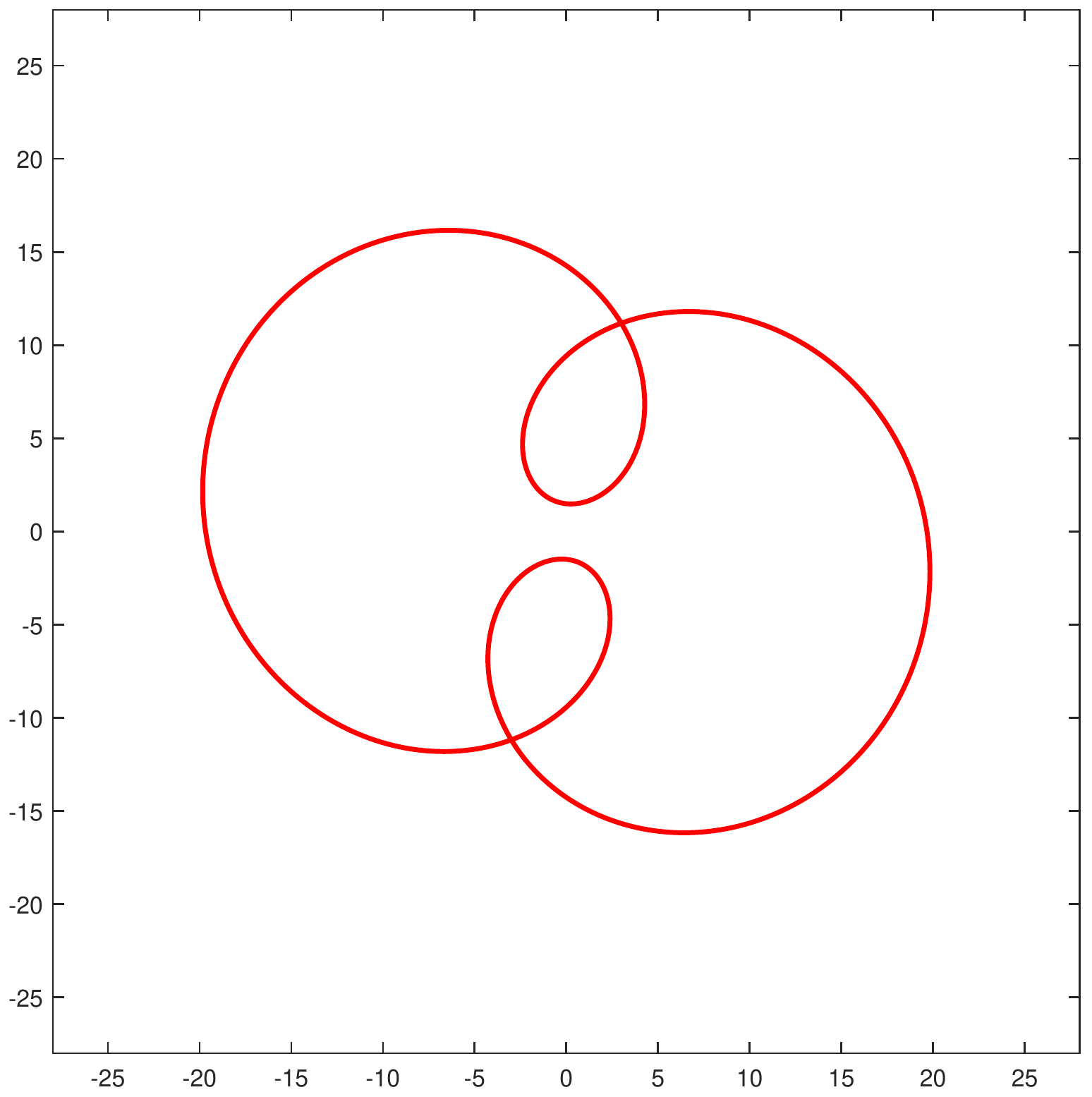}}}
\subfloat[$t=2$ ]{\scalebox{0.32}{\includegraphics{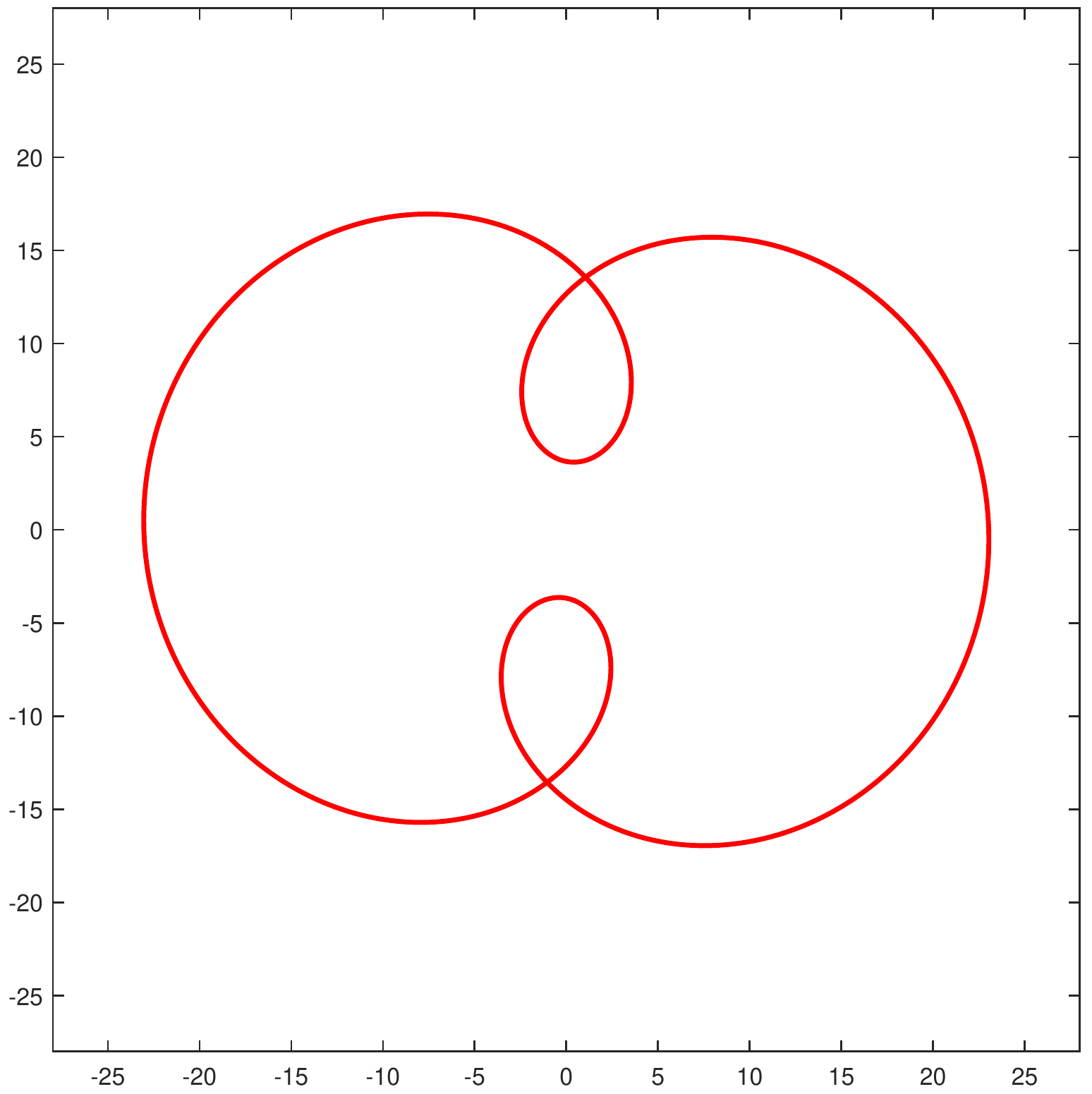}}}
\subfloat[$t=4$ ]{\scalebox{0.32}{\includegraphics{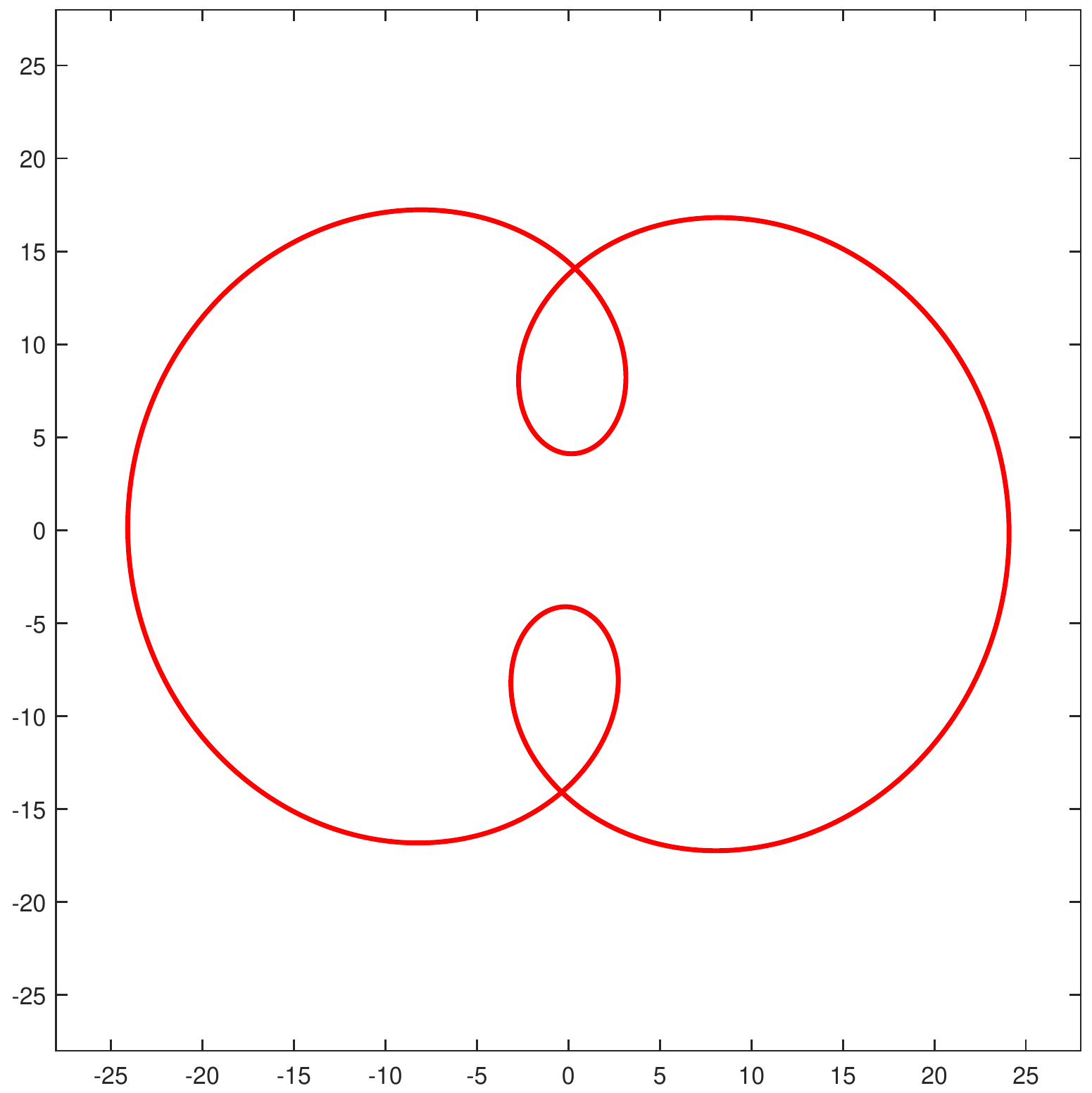}}}
\caption{Some Time Shots of the Evolution} \label{fig:2.202002}
\end{figure}

~\\
\textbf{Acknowledgments}
Laiyuan Gao is supported by National Natural Science Foundation of China (No.11801230).
This work is completed when Gao visited University of California, San Diego.
He thanks Prof. Lei Ni and Prof. Bennett Chow for their hospitality.

{\bf Laiyuan Gao}

School of Mathematics and Statistics, Jiangsu Normal University.

No.101, Shanghai Road, Xuzhou City, Jiangsu Province, China.

Email: lygao@jsnu.edu.cn\\\\


\begin{thebibliography}{99}

\bibitem{Abresch-Langer-1986} U. Abresch, J. Langer,
The normalized curve shortening flow and homothetic solutions.
J. Differential Geom. No. 2, Vol. 34, 23 (1986), 175-196.

\bibitem{Altschuler-1991} S. J. Altschuler,
Singularities of the curve shrinking flow for space curves.
J. Differential Geom. Vol. 34 (1991), 491-514.

\bibitem{Andrews-2002} B. Andrews,
Classification of limiting shapes for isotropic curve flows.
Journal of the American Mathematical Society No. 2, Vol. 16 (2002), 443-459.

\bibitem{Angenent-1991} S. Angenent,
On the formation of singularities in the curve shortening flow.
J. Differential Geom. No. 3, Vol. 33 (1991), 601-633.

\bibitem{Angenent-2005} S. Angenent,
Curve shortening and the topology of closed geodesics on surfaces.
Annals of Mathematics 162 (2005), 1187-1241.

\bibitem{Chen-Wang-Yang-2017} W.-Y. Chen, X.-L. Wang, M. Yang,
Evolution of highly symmetric curves under the shrinking curvature flow.
Math. Methods Appl. Sci. No. 10, Vol. 40 (2017), 3775-3783.

\bibitem{Chow-Liou-Tsai} B. Chow, L.-P. Liou, D.-H. Tsai,
Expansion of embedded curves with turning angle greater than $-\pi$.
Invent. Math. Vol. 123 (1996), 415-429.

\bibitem{Chou-Zhu-1999-1} K.-S. Chou \& X.-P. Zhu,
Anisotropic flows for convex plane curves.
Duke Math. J. No. 3, Vol. 97 (1999), 579-619.

\bibitem{Chou-Zhu-1999-2} K.-S. Chou \& X.-P. Zhu,
A convexity theorem for a class of anisotropic flows of plane curves.
Indiana Univ. Math. J. 48(1) (1999), 139-154.


\bibitem{Epstein-Gage-1987} C. L. Epstein \& M. Gage,
The curve shortening flow.
Wave Motion: Theory, Modeling and Computation, A Chorin and A Majda,
Editors, Springer-Verlag, New York, 1987.


\bibitem{Gage-1983} M. E. Gage,
An isoperimetric inequality with applications to curve shortening.
Duke Math. J. No. 4, Vol. 50 (1983), 1225-1229.

\bibitem{Gage-1984} M. E. Gage,
Curve shortening makes convex curves circular.
Invent. Math. No. 2, Vol. 76, (1984), 357-364.

\bibitem{Gage-1986} M. E. Gage,
On an area-preserving evolution equation for plane curves.
in: D.M. DeTurck (Ed.), Nonlinear Problems in Geometry,
in: Contemp. Math. Vol. 51 (1986), 51-62.

\bibitem{Gage-1993} M. E. Gage,
Evolving plane curves by curvature in relative geometries.
Duke Math. J. No. 2, Vol. 72 (1993), 441-466.

\bibitem{Gage-Hamilton-1986} M. E. Gage \& R. S. Hamilton,
The heat equation shrinking convex plane curves.
J. Differentail. Geom. Vol. 23 (1986), 69-96.

\bibitem{Gage-Li-1994} M. E. Gage \& Yi Li,
Evolving plane curves by curvature in relative geometries II.
Duke Math. J. No. 1, Vol. 75 (1994), 79-98.

\bibitem{Gao-Wang-2008} L.-Y. Gao \& Yi-L. Wang,
Deforming convex curves with fixed elastic energy.
J. Math. Anal. Appl. Vol. 427 (2015), 817-829.

\bibitem{Gao-Zhang-2017} L.-Y. Gao \& Y.-T. Zhang,
Evolving convex surfaces to constant width ones.
International Journal of Mathematics No. 11, Vol. 28(2017), 1750082 (18 pages).


\bibitem{Gao-Zhang-2019} L.-Y. Gao \& Y.-T. Zhang,
On Yau's problem of evolving one curve to another: convex case.
J. Differential Equations Vol. 266 (2019), 179-201.




\bibitem{Grayson-1987} M. Grayson,
The heat equation shrinks embedded plane curve to round points.
J. Differential Geom. Vol. 26 (1987), 285-314.

\bibitem{Grayson-1989} M. Grayson,
Shortening Embedded Curves.
The Annals of Mathematics, Second Series No.1, Vol.129 (1989), 71-111.

\bibitem{Hadamard-1898} J. Hadamard,
Les surfaces \`{a} courbures oppos\'{e}es et leurs linges g\'{e}od\'{e}siquese.
J. Math. Pures Appl. 4 (1898), 27-73.

\bibitem{Hopf-1983} H. Hopf,
Differential geometry in the large. Notes taken by Peter Lax and John Gray. With a preface by S. S. Chern.
Lecture Notes in Mathematics, 1000. Springer-Verlag, Berlin, 1983. vii+184 pp.

\bibitem{Lin-Tsai-2009} Y.-C. Lin \& D.-H. Tsai,
Evolving a convex closed curve to another one via a length-preserving linear flow.
J. Differential Equations Vol. 247 (2009), 2620-2636.

\bibitem{Pan-Yang-2019} S.-L. Pan \& Y.-L. Yang,
An anisotropic area-preserving flow for convex plane convex.
J. Differential Equations No. 6, Vol. 266 (2009), 3764-3786.

\bibitem{Singer-2008} D. A. Singer,
Lectures on elastic curves and rods,
AIP Conf. Proc. Vol. 1002 (2008), 3-32.

\bibitem{Tsai-2005} D.-H. Tsai,
Asymptotic closeness to limiting shapes for expanding embedded plane curves.
Invent. Math. Vol. 162 (2005), 473-492,

\bibitem{Tsai-2018} D.-H. Tsai,
On flows that preserve parallel curves and their formation of singularities.
J. Evol. Equ. No. 2, Vol. 18 (2018), 303-321.

\bibitem{Wang-Li-Chao-2017} X.-L. Wang, H.-L. Li, X.-Li Chao,
Length-preserving evolution of immersed closed curves and the isoperimetric inequality.
Pacific J. Math. 290 (2017), no. 2, 467-479.

\bibitem{Wang-Wo-Yang-2018} X.-L. Wang, W.-F. Wo, M. Yang,
Evolution of non-simple closed curves in the area-preserving curvature flow.
Proc. Roy. Soc. Edinburgh Sect. A 148 (2018), no. 3, 659-668.

\bibitem{Whitney-1937} H. Whitney, On regular closed curves in the plane.
Compositio Mathematica Tome 4 (1937), 276-284.

\end{thebibliography}
\end{document}